\newtheorem{theorem}{Theorem}
\newtheorem{lemma}[theorem]{Lemma}
\newtheorem{corollary}[theorem]{Corollary}
\newtheorem{lettertheorem}{Theorem}
\theoremstyle{definition}
\newtheorem{example}[theorem]{Example}
\theoremstyle{remark}
\numberwithin{equation}{section}
\newcommand{\D}{\mathbb{D}}
\newcommand{\N}{\mathbb{N}}
\newcommand{\HO}{\mathcal{H}}
\newcommand{\C}{\mathbb{C}}
\newcommand{\e}{\varepsilon}
\newcommand{\T}{\mathbb{T}}
\newcommand{\Th}{\Theta}
\def\a{\alpha}               \def\g{\gamma}
           \def\e{\varepsilon}
\def\s{\sigma}       \def\t{\theta}       
         \def\r{\rho}         \def\z{\zeta}
\begin{document}

\title{Remarks on one-component inner functions}

\subjclass[2010]{Primary: 30J05; Secondary: 30H10}
\keywords{Bergman space, Blaschke product, Hardy space, one-component inner function, singular inner function}

\thanks{This research was supported by Finnish Cultural Foundation.}



\author{Atte Reijonen}
\address{Graduate School of Information Sciences, Tohoku University, Aoba-ku, Sendai 980-8579, Japan}
\email{atte.reijonen@uef.fi}

\maketitle



\begin{abstract}
A one-component inner function $\Theta$ is an inner function whose level set
    $$\Omega_{\Theta}(\varepsilon)=\{z\in \mathbb{D}:|\Theta(z)|<\varepsilon\}$$
is connected for some $\varepsilon\in (0,1)$.
We give a sufficient condition for a Blaschke product with zeros in a Stolz domain to be
a one-component inner function.
Moreover, a sufficient condition is obtained in the case of atomic singular inner functions.

We study also derivatives of one-component inner functions in the Hardy and Bergman spaces.
For instance, it is shown that, for $0<p<\infty$, the derivative of a one-component inner function $\Theta$ is a member of the Hardy space $H^p$
if and only if $\Theta''$ belongs to the Bergman space $A_{p-1}^p$, or equivalently $\Theta'\in A_{p-1}^{2p}$.
\end{abstract}

\section{Examples of one-component inner functions}\label{Sec1}

Let $\D$ be the open unit disc of the complex plane $\C$.
A bounded and analytic function in $\D$ is an inner function if it has unimodular radial limits almost everywhere on the boundary $\T$ of $\D$. In this note, we study so-called one-component inner functions \cite{Cohn1982}, which are inner functions $\Th$ whose level set
    $$\Omega_\Th(\e)=\{z\in \D:|\Th(z)|<\e\}$$
is connected for some $\e\in (0,1)$. In particular, Blaschke products in this class are of interest. For a given sequence $\{z_n\}\subset \D \setminus \{0\}$ satisfying
$\sum_{n}(1-|z_n|)<\infty$, the Blaschke product with zeros $\{z_n\}$ is defined by
    \begin{equation*}\label{Eq:Blaschke}
    B(z)=\prod_{n}\frac{|z_n|}{z_n}\frac{z_n-z}{1-\overline{z}_nz}, \quad z\in \D.
    \end{equation*}
Here each zero $z_n$ is repeated according to its multiplicity. In addition, we assume that $\{z_n\}$ is ordered by non-decreasing moduli.

Recently several authors have studied one-component inner functions in the context of model spaces and operator theory;
see for instance \cite{ALMP2016, BBK2011, Besonov2016, Besonov2015}. In addition, A.~B.~Aleksandrov's paper \cite{Aleksandrov2002}, which contains several characterizations for one-component inner functions, is worth mentioning.
These references do not offer any concrete examples of infinite one-component Blaschke products.
In recent paper \cite{CM2017} by J.~Cima and R.~Mortini, one can find some examples. However, all one-component Blaschke products
constructed in \cite{CM2017} have some heavy restrictions. Roughly speaking, zeros of all of them are at least uniformly separated.
Recall that $\{z_n\}\subset \D$ is called uniformly separated if
    $$
    \inf_{n\in\N}\prod_{k\ne n}\left|\frac{z_k-z_n}{1-\overline{z}_kz_n}\right|>0.
    $$
As a concrete example, we mention that the Blaschke product with zeros $z_n=1-2^{-n}$ for $n\in \N$ is a one-component inner function \cite{CM2017}.
In addition, it is a well-known fact that every finite Blaschke product is a one-component inner function.

For $\g\ge 1$, $\xi\in\T$ and $C>0$, we define
    $$
    R(\g,\xi,C)=\{z\in\D:|1-\overline{\xi} z|^\g\le C(1-|z|)\}.
    $$
The region $R(1,\xi,C)$ is a Stolz domain with vertex at $\xi$.
Note that in the case $\g=1$ we have to assume $C>1$.
For $\g>1$, $R(\g,\xi,C)$ is a tangential approaching region in $\D$,
which touches $\T$ at $\xi$.
Denote by $\mathcal{R}_\g$ the family of all Blaschke products whose zeros lie in some $R(\g,C,\xi)$ with a fixed $\g$.
References related to $\mathcal{R}_\g$ are for instance \cite{AhernClark1974,Cargo1962,GPV2008}.
With these preparations we are ready to state our first main result.

\begin{theorem}\label{Thm1}
Let $B$ be a member of $\mathcal{R}_1$ with zeros $\{z_n\}_{n=1}^\infty$.
If
    \begin{equation}\label{Eq:thm1-cond}
    \begin{split}
    \liminf_{n \rightarrow \infty}\frac{\sum_{|z_j|>|z_n|} (1-|z_j|)}{1-|z_n|}>0,
    \end{split}
    \end{equation}
then $B$ is a one-component inner function.
\end{theorem}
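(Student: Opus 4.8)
The plan is to exhibit, for a suitable $\e\in(0,1)$, a connected subset of $\Omega_B(\e)$ containing every zero of $B$, and then to show that $\Omega_B(\e)$ has no other component, so that it is connected. After a rotation we may assume the vertex of the Stolz domain is $\xi=1$, so $\{z_n\}\subset R(1,1,C)$, and we fix a constant $\Gamma>C$; the connected set will be the wider Stolz region $E:=R(1,1,\Gamma)$.

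The core of the proof is the estimate that there exist $\eta\in(0,1)$ and $\delta_0>0$ with $|B(z)|\le\eta$ for every $z\in E$ satisfying $1-|z|<\delta_0$. To see this, put $\tau:=1-|z|$ and keep, from the Blaschke product, only the factors indexed by zeros $z_k$ with $1-|z_k|\le\tau/2$. For such $z$ and $z_k$, both of which lie in a Stolz domain with vertex $1$, the identity
$$
1-\left|\frac{z_k-z}{1-\overline{z_k}\,z}\right|^2=\frac{(1-|z|^2)(1-|z_k|^2)}{|1-\overline{z_k}\,z|^2}
$$
together with the elementary bounds $|1-\overline{z_k}\,z|\lesssim\tau$ and $(1-|z|^2)(1-|z_k|^2)\ge\tau(1-|z_k|)$ gives $\big|(z_k-z)/(1-\overline{z_k}\,z)\big|^2\le 1-c\,(1-|z_k|)/\tau$ for a constant $c>0$. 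Since every factor has modulus at most $1$,
$$
|B(z)|\le\prod_{1-|z_k|\le\tau/2}\left|\frac{z_k-z}{1-\overline{z_k}\,z}\right|\le\exp\!\left(-\frac{c}{2\tau}\sum_{1-|z_k|\le\tau/2}(1-|z_k|)\right).
$$
Here the hypothesis \eqref{Eq:thm1-cond} is used: fix $\delta>0$ with $\sum_{|z_j|>|z_n|}(1-|z_j|)\ge\delta(1-|z_n|)$ for all large $n$, and let $m$ be the largest index with $1-|z_m|>\tau/2$. Then $m\to\infty$ as $\tau\to0$, and the index set $\{k:1-|z_k|\le\tau/2\}$ coincides with $\{k:|z_k|>|z_m|\}$; hence $\sum_{1-|z_k|\le\tau/2}(1-|z_k|)\ge\delta(1-|z_m|)>\tfrac{\delta}{2}\tau$ once $\tau$ is small, and so $|B(z)|\le e^{-c\delta/4}=:\eta<1$.

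To choose $\e$, observe that $\{z\in\overline{E}:1-|z|\ge\delta_0\}$ is closed, bounded, and disjoint from $\T$, hence a compact subset of $\D$; thus $M:=\max\{|B(z)|:z\in\overline{E},\ 1-|z|\ge\delta_0\}<1$. Put $\e:=\tfrac12\big(1+\max(\eta,M)\big)\in(0,1)$. Then $E\subset\Omega_B(\e)$: on the part of $E$ with $1-|z|<\delta_0$ this is the estimate above, and on the rest it is the definition of $M$. Moreover $E$ is connected and contains every $z_n$, because $z_n\in R(1,1,C)\subset R(1,1,\Gamma)=E$. Hence all zeros of $B$ lie in a single component $U_0$ of $\Omega_B(\e)$.

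It remains to rule out other components. Let $U$ be a component of $\Omega_B(\e)$. If $U$ contains a zero of $B$, then $U=U_0$. If not, $B$ is zero-free on $U$; since the zeros of $B$ can accumulate only at $1$, $B$ extends analytically across $\T\setminus\{1\}$ with modulus $1$ there, so $\overline{U}\cap\T\subset\{1\}$; and as $|B|=\e$ on $\partial U\cap\D$ while $|B|<\e$ on $U$, the minimum modulus principle (a zero-free holomorphic function does not attain its least modulus at an interior point) forces $\overline{U}$ not to be a compact subset of $\D$. Therefore $1\in\overline{U}$. Consequently the argument is complete as soon as one knows that $\Omega_B(\e)$ is connected in a neighbourhood of $1$ — for then $U$ meets that connected set, which also meets $E\subset U_0$, so $U=U_0$. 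This is the step I expect to be the main obstacle: via a Cayley transform taking $1$ to the origin, $\Omega_B(\e)$ becomes near the origin the super-level set $\{\Phi>2\log(1/\e)\}$ of $\Phi(w)=\sum_n\log\!\big(1+4\,\Imag w\,\Imag\tau_n/|w-\tau_n|^2\big)$, where the images $\tau_n$ of the zeros all lie in a sector contained in the interior of the image of $E$; analysing the horizontal slices of this set — and using crucially that every $\tau_n$ is interior to the sector, so that outside the sector none of the summands of $\Phi$ has a singularity and each is monotone as one moves away from its centre — one concludes that these slices are intervals and hence that the set is connected near the origin.
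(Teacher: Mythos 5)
Your argument divides into two halves, and only the first is complete. The first half is correct and well executed: for $z$ in the enlarged Stolz region $E=R(1,1,\Gamma)$ with $\tau=1-|z|$ small, discarding the factors with $1-|z_k|>\tau/2$ and using $|1-\overline{z_k}z|\lesssim\tau$ (valid because both $z$ and $z_k$ lie in Stolz domains at $1$ and $1-|z_k|\le\tau/2$) does give $|B(z)|\le\exp\bigl(-\tfrac{c}{2\tau}\sum_{1-|z_k|\le\tau/2}(1-|z_k|)\bigr)$, and hypothesis \eqref{Eq:thm1-cond} is exactly what makes the exponent bounded away from $0$; together with the compactness argument this puts $E$, and hence all zeros, inside one component $U_0$ of $\Omega_B(\e)$. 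The minimum--modulus argument showing that any other component must cluster at $1$ is also fine.

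The gap is the step you yourself flag: you never prove that $\Omega_B(\e)$ is connected in a neighbourhood of $1$, and without that the components clustering at $1$ are not identified with $U_0$, so connectedness of $\Omega_B(\e)$ is not established. The heuristic you offer does not close it: after the Cayley transform each summand of $\Phi$ is indeed unimodal on each horizontal line, but a sum of unimodal functions need not be unimodal, and super-level sets of such sums can perfectly well have disconnected horizontal slices (two well-separated bumps already do this); the zeros' abscissae $x_n\le cy_n$ are \emph{not} confined to the slice of the sector at the height $y$ under consideration, since zeros at heights $y_n\gg y$ contribute terms whose peaks lie far outside it, so the claimed monotonicity off the sector is not automatic. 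Some genuine additional input is needed here, and this is precisely the difficulty the paper's proof avoids altogether: it invokes Aleksandrov's characterization (Theorem~\ref{ThmB}), which converts one-componentness into the boundary inequality $|B''(\zeta)|\le C|B'(\zeta)|^2$ on $\T\setminus\{1\}$ plus a condition at the spectrum, and then verifies that inequality with the Ahern--Clark estimates, using \eqref{Eq:thm1-cond} in much the same way you use it in your first half. I would recommend either completing your topological step with an honest proof (for instance by showing $\Omega_B(\e)$ is squeezed between two connected regions of the form $\{1-|z|\ge\phi(|1-z|)\}$ and controlling what happens in between), or switching to the route through Theorem~\ref{ThmB}, where your level-set estimate is no longer needed but your summation argument transfers directly.
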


As a consequence of Theorem~\ref{Thm1}, we obtain the affirmative
answer to the following question posed in \cite{CM2017}:
Is the Blaschke product $B$ with zeros $z_n=1-n^{-2}$ for $n\in \N$ a one-component inner function?
Some other examples of one-component inner functions are listed below. All of these examples can be verified
by using the fact that condition \eqref{Eq:thm1-cond} is valid if $\{z_n\}$ is ordered by strictly increasing moduli and
    $$ \liminf_{n \rightarrow \infty} \frac{1-|z_{n+1}|}{1-|z_n|}>0.$$

\begin{example}\label{Coro1}
Let $1<\a<\infty$ and $B$ be a Blaschke product with zeros
    \begin{itemize}
    \item[\rm(a)] $z_n=1-n^{-\a}$ for $n\in \N$, or
    \item[\rm(b)] $z_n=1-\frac{1}{n(\log n)^\a}$ for $n\in \N\setminus \{1\}$, or
    \item[\rm(c)] $z_n=1-\a^{-n}$ for $n\in \N$.
    \end{itemize}
Then $B$ is a one-component inner function.
\end{example}

A Blaschke product $B$ is said to be thin if its zeros $\{z_n\}_{n=1}^\infty$ satisfy
    $$\lim_{n \rightarrow \infty} (1-|z_n|^2)|B'(z_n)|=1.$$
We interpret that finite Blaschke products are not thin.
By \cite[Corollary~21]{CM2017}, any thin Blaschke product is not a one-component inner function.
Using this fact and \cite[Proposition~4.3(i)]{CFT2003}, we can give an example which shows that
condition \eqref{Eq:thm1-cond} in Theorem~\ref{Thm1} is essential.

\begin{example}
Let $B$ be the Blaschke product with zeros $\{w_n\}_{n=1}^\infty$ ordered by strictly increasing moduli and satisfying
    $$\frac{1-|w_{n+1}|}{1-|w_n|}\longrightarrow 0, \quad n \rightarrow \infty.$$
Then, by \cite[Proposition~4.3(i)]{CFT2003}, $B$ is a thin Blaschke product (with uniformly separated zeros).
Consequently, for instance, the Blaschke product with zeros $z_n=1-2^{-2^n}$ for $n\in \N$ is not a one-component inner function.
Note that zeros $\{z_n\}$ lie in $R(1,1,C)$ for every $C>1$ but they do not satisfy \eqref{Eq:thm1-cond}.
\end{example}

Let us recall a classical result of O.~Frostman \cite{Frostman1942}:
The Blaschke product $B$ with zeros $\{z_n\}$ has a unimodular radial limit at $\xi \in \T$ if and only if
    \begin{equation} \label{Eq:Frostman-cond}
    \sum_n \frac{1-|z_n|}{|\xi-z_n|}<\infty.
    \end{equation}
A Blaschke product is called a Frostman Blaschke product if it has a unimodular radial limit at every point on $\T$.
It is a well-known fact that an infinite Frostman Blaschke product cannot be a one-component inner function; see for instance
\cite[Theorem~1.11]{Aleksandrov2002} or Theorem~\ref{ThmB} in Section~\ref{Sec3}.
Using this fact, we show that any $\mathcal{R}_\g$ with $\g>1$ contains
a member which is not a one-component inner function but its zeros $\{z_n\}$ satisfy \eqref{Eq:thm1-cond}.
This means that the hypothesis $B\in \mathcal{R}_1$ in Theorem~\ref{Thm1} is essential.

\begin{example}
Fix $\g>1$ and choose $\a=\a(\g)>1$ such that $\a>\frac{\g}{\g-1}$. Let $\{z_n\}$ be such that
    $$|z_n|=1-n^{-\a} \quad \text{and} \quad |1-z_n|=n^{-\a/\g}, \quad n\in \N.$$
Since the sequence $\{z_n\}$ is a subset of $R(\g,1,1)$, all points of $\{z_n\}$ lie in $\D$.
Moreover, it is clear that $\{z_n\}$ satisfies the Blaschke condition $\sum_n (1-|z_n|)<\infty$
and \eqref{Eq:thm1-cond} in Theorem~\ref{Thm1}.
Hence the Blaschke product with zeros $\{z_n\}$ is well-defined. Furthermore,
    \begin{equation*}
    \sum_{n=1}^\infty \frac{1-|z_n|}{|1-z_n|}
    =\sum_{n=1}^\infty n^{\a/\g-\a}<\infty;
    \end{equation*}
and thus, the Blaschke product $B$ has a unimodular radial limit at 1 by Frostman's result.
Since condition \eqref{Eq:Frostman-cond} is trivially valid for every $\xi \in \T\setminus \{1\}$,
$B$ is an infinite Frostman Blaschke product. Consequently, it is not a one-component inner function.
\end{example}

Recall that a singular inner function takes the form
    $$
    S_\s(z)=\exp\left(\int_{\T} \frac{z+\xi}{z-\xi}\, d\s(\xi)\right),\quad z\in\D,
    $$
where $\s$ is a positive measure on $\T$, singular with respect to the Lebesgue measure.
If the measure $\s$ is atomic, then this definition reduces to the form
    \begin{equation*}
    S(z)=\exp\left(\sum_{n}\gamma_n \frac{z+e^{i\t_n}}{z-e^{i\t_n}} \right),\quad z\in\D,
    \end{equation*}
where $\t_n\in [0,2\pi)$ are distinct points and $\gamma_n>0$ satisfy $\sum_{n}\g_n<\infty$. These functions are known as atomic singular inner functions associated with $\{e^{i\t_n}\}$ and $\{\gamma_n\}$.

An atomic singular inner function associated with a measure having only finitely many mass points is a one-component inner function; see \cite[Corollary~17]{CM2017}. In the literature, one cannot find any example of a one-component singular inner function associated with a measure having infinitely many mass points.
However, the following result gives a way to construct such functions.

\begin{theorem}\label{Thm1b}
Let $S$ be the atomic singular inner function associated with $\{e^{i\t_n}\}_{n=0}^\infty$ and $\{\gamma_n\}_{n=0}^\infty$.
Moreover, assume that the following conditions are valid:
    \begin{itemize}
    \item[\rm(i)] $\t_0=0$, $\{\t_n\}_{n=1}^\infty \subset (0,1)$ is strictly decreasing and $\lim_{n \rightarrow \infty} \t_n=0$.
    \item[\rm(ii)] There exists a constant $C=C(S)>0$ such that $|\t_{n-1}-\t_{n+1}|\le C\g_n^2$
    for all sufficiently large $n\in \N$.
    \end{itemize}
Then $S$ is a one-component inner function.
\end{theorem}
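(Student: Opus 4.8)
The plan is to show directly that some level set $\Omega_S(\e)$ is connected by understanding the geometry of $|S|$ near the accumulation point $1$. Writing $S = S_0 \cdot \prod_{n\ge 1} S_n$ where $S_n$ is the one-point atomic singular inner function with mass $\gamma_n$ at $e^{i\theta_n}$, we have
    $$
    -\log|S(z)| = \sum_{n\ge 0} \gamma_n \, \Real\frac{1+\overline{e^{i\theta_n}}z}{1-\overline{e^{i\theta_n}}z} = \sum_{n\ge 0} \gamma_n \frac{1-|z|^2}{|1-\overline{e^{i\theta_n}}z|^2}.
    $$
So $\Omega_S(\e) = \{z : \sum_n \gamma_n P_{\theta_n}(z) > \log(1/\e)\}$, where $P_{\theta_n}$ is the Poisson-type kernel above. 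The first step is to localize: away from a fixed neighborhood of $1$ on $\T$, only finitely many $\theta_n$ are relevant and the corresponding finite atomic singular inner function is one-component by \cite[Corollary~17]{CM2017}, so the level set there is already well-behaved; the real work is near $\xi = 1$. I would pass to the half-plane via $w = i\frac{1-z}{1+z}$ or simply work with the quantities $1-|z|$ and $\arg(1-\bar z)$, and track that near $1$ the individual bumps $\gamma_n P_{\theta_n}$ each have a "core" region — roughly a Carleson-type box anchored at $e^{i\theta_n}$ of height comparable to $\gamma_n$ — on which $-\log|S_n|$ is bounded below by an absolute constant.

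The key geometric step is to show these cores \emph{overlap} (or at least chain together) so that their union, hence a sublevel set of $-\log|S|$, is connected. This is exactly where hypothesis (ii) enters: the spacing $|\theta_{n-1}-\theta_{n+1}|$ between consecutive atoms (on the $\T$ side) is $\lesssim \gamma_n^2$, while the $n$-th atom's influence at a point at "height" $h \approx \gamma_n$ above $e^{i\theta_n}$ reaches horizontally a distance $\approx h = \gamma_n$ along $\T$ before $P_{\theta_n}$ decays. Thus I want to exhibit, for each large $n$, a point $z_n$ (say at height $\approx \gamma_n$ over the midpoint of $[\theta_{n+1},\theta_{n-1}]$, or just over $\theta_n$) where $-\log|S(z)| \ge c > 0$ on a whole segment connecting $z_{n+1}$ to $z_n$; the spacing bound $|\theta_{n-1}-\theta_{n+1}|\lesssim \gamma_n^2$ should force the relevant kernels to stay large along such a connecting path, because the horizontal distances involved are $O(\gamma_n^2) \ll \gamma_n \approx$ height, keeping $|1-\overline{e^{i\theta_n}}z|^2 \lesssim \gamma_n^2$. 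Concretely one estimates $\gamma_n P_{\theta_n}(z) \gtrsim \gamma_n \cdot \frac{\gamma_n}{\gamma_n^2} = 1$ uniformly along the path. Then one also needs the radial segment from $z_0$ (a fixed point) inward to $1$ to stay in the level set, connecting the "near $1$" piece to the bulk; this follows since $-\log|S(z)| \to \infty$ as $z\to 1$ radially (because $\sum\gamma_n<\infty$ but the $\theta_0=0$ atom alone blows up radially, or any tail does), and is continuous.

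The main obstacle I anticipate is \textbf{controlling the level set globally}, not just producing a connected subset of $\Omega_S(\e)$: a level set is connected iff it equals such a subset, so I must also rule out extra components. The standard trick (used in \cite{Aleksandrov2002, CM2017}) is that $|S|$ has no local minima in $\D$ other than... well, $|S|$ is the modulus of an analytic function so its only "small" values cluster near $\T$; more precisely, by the minimum principle $\Omega_S(\e)$ has no component compactly contained in $\D$, so every component touches $\T$ (in the sense of having boundary accumulation). Then one argues that the accumulation set on $\T$ of $\Omega_S(\e)$, for $\e$ close to $1$, is contained in a small arc around $1$ (since off a neighborhood of $1$, $|S|$ is bounded away from $0$ on a smaller disc, using that the finitely-many-atom factor is continuous up to those boundary points and the tail factor is close to $1$ there), and that arc is "covered" by the single connected subset we built. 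Making the two neighborhoods (the one where the tail is negligible, and the one our chain of cores covers) match up — i.e. choosing $\e$ correctly and showing the chain reaches all the way out to where the bulk takes over — will require a careful but routine bookkeeping of constants, and is the step I'd budget the most care for. A secondary subtlety is the lower-order behavior when $\gamma_n$ does not decay monotonically or decays very fast; condition (i) (monotone $\theta_n \downarrow 0$) plus (ii) should be enough to keep the cores nested/chained, but verifying the segments connecting $z_{n+1}$ to $z_n$ genuinely lie in $\Omega_S(\e)$ — as opposed to dipping out between two atoms — is the crux and is precisely what (ii) is engineered to guarantee.
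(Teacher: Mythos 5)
Your plan is a genuinely different route from the paper's (direct topology of the level set rather than a derivative criterion), and the part you identify as the crux --- chaining the horodisc ``cores'' --- does work: since (ii) gives $|\t_n-\t_{n+1}|\le C\min(\g_n,\g_{n+1})^2$, a vertical segment over $\t_n$ between heights $\g_{n+1}$ and $\g_n$ stays where $\g_nP_{\t_n}+\g_{n+1}P_{\t_{n+1}}\gtrsim 1$. The genuine gap is elsewhere: you only construct \emph{one} connected subset of $\Omega_S(\e)$, and the step that upgrades this to connectedness of the whole level set is not ``routine bookkeeping.'' Two specific problems. First, your claim that the accumulation set of $\Omega_S(\e)$ on $\T$ lies in a small arc around $1$ is false: $|S(re^{i\t_n})|\to 0$ for \emph{every} mass point, so the level set accumulates at each $e^{i\t_n}$; what is true is that it accumulates only on $\r(S)=\{e^{i\t_n}\}\cup\{1\}$, and near an isolated $e^{i\t_n}$ one can absorb everything into the $n$-th horodisc at the cost of halving $\delta=\log(1/\e)$ (locally $\{u>\delta\}\subset\{\g_nP_{\t_n}>\delta/2\}$, which is convex). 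Second, and more seriously, near the accumulation point $1$ a component of $\{u>\delta\}$ can a priori weave between the cores: a point $z$ can satisfy $\sum_k\g_kP_{\t_k}(z)>\delta$ through many small contributions with no single term large, so such $z$ need not lie in any core, and nothing in the proposal connects it to your chain. Ruling this out requires a genuine upper bound on the full sum off the chain region (equivalently, a quantitative ``uniform connectedness'' of sublevel sets near $1$), and that estimate is essentially the whole content of the theorem; deferring it leaves the proof incomplete.

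For contrast, the paper avoids all of this topology by invoking Aleksandrov's characterization (Theorem~\ref{ThmB}): since hypothesis (i) makes $\{e^{i\t_n}\}_{n\ge0}$ closed, $\r(S)$ is exactly this set and $|S(r\xi)|\to0$ there, so \eqref{Eq:lA-1} is free, and the problem reduces to the single boundary inequality $|S''(\z)|\lesssim|S'(\z)|^2$ on $\T\setminus\r(S)$. Via the Ahern--Clark formula $|S'(\z)|=2\sum_n\g_n|\z-e^{i\t_n}|^{-2}$ this becomes $\sum_n\g_n|\z-e^{i\t_n}|^{-3}\lesssim\bigl(\sum_n\g_n|\z-e^{i\t_n}|^{-2}\bigr)^2$, which is checked pointwise by comparing against the nearest atom $\t_j$ and using (ii) in the form $\g_j\ge c|\t_{j-1}-\t_{j+1}|^{1/2}$. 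If you want to salvage your approach, the cleanest fix is probably to prove exactly this kind of pointwise lower bound for $u$ in terms of the nearest atom and feed it into the ``absorb into a core'' step; but at that point you have reproduced the paper's estimate and the geometric scaffolding buys little.
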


Next we give a concrete example of a one-component singular inner function.
This example is a direct consequence of Theorem~\ref{Thm1b}.

\begin{example}\label{Ex-sing}
Let $\t_0=0$, $\t_n=2^{-n}$, $\g_0=1$ and $\g_n=n^{-2}$ for $n\in \N$. Then the atomic singular inner function $S$
associated with $\{e^{i\t_n}\}_{n=0}^\infty$ and $\{\g_n\}_{n=0}^\infty$ is a one-component inner function.
\end{example}

The remainder of this note is organized as follows.
Sections~\ref{Sec3}~and~\ref{Sec4} consist of the proofs of Theorems~\ref{Thm1}~and~\ref{Thm1b}, respectively.
In Section~\ref{Sec2}, we study one-component inner functions
whose derivatives belong to the Hardy or Bergman spaces. In particular, we give partial improvements
for \cite[Theorem~6.2]{Ahern1979} and \cite[Theorem~3.10]{GGJ2011}.

\section{Proof of Theorem~\ref{Thm1}}\label{Sec3}

We begin by stating a modification of \cite[Theorem~1.11]{Aleksandrov2002}, which is due to \cite[p.~2915,~Remark~2]{Aleksandrov2002}.
This result offers two practical characterizations for one-component inner functions
and plays an important role in the proofs of Theorems~\ref{Thm1}~and~\ref{Thm1b}.
Before it we recall that the spectrum $\r(\Th)$ of an inner function $\Th$ is the set of all point on $\T$ in which $\Th$ does not have an
analytic continuation. It is a well-known fact that the spectrum of a Blaschke product consists of the accumulation points of zeros.
By \cite[Chapter~2,~Theorem~6.2]{Garnett1981}, the spectrum of a singular inner function $S_\s$ is the closed support of the associated measure $\s$.

\begin{lettertheorem}\label{ThmB}
Let $\Th$ be an inner function. Then the following statements are equivalent:
    \begin{itemize}
    \item[\rm(a)] $\Th$ is a one component inner function.
    \item[\rm(b)] There exists a constant $C=C(\Th)>0$ such that
    \begin{equation}\label{Eq:lA-2}
    |\Th''(\z)|\le C|\Th'(\z)|^2, \quad \z\in \T\setminus \r(\Th),
    \end{equation}
and
    \begin{equation}\label{Eq:lA-1}
    \liminf_{r \rightarrow 1^{-}} |\Th(r\xi)|<1, \quad \xi\in \r(\Th).
    \end{equation}
\item[\rm(c)] There exists a constant $C=C(\Th)>0$ such that \eqref{Eq:lA-2} holds,
the Lebesgue measure of $\r(\Th)$ is zero and $\Th'$ is not bounded on any arc $\Gamma \subset \T\setminus \r(\Th)$
with $\overline{\Gamma} \cap \r(\Th)\neq \emptyset$.
    \end{itemize}
\end{lettertheorem}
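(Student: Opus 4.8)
We indicate how Theorem~\ref{ThmB} could be proved directly; as noted, it is also a consequence of \cite{Aleksandrov2002} (Theorem~1.11 and the remark on p.~2915 there). The plan is to establish $(a)\Leftrightarrow(b)$ and $(b)\Leftrightarrow(c)$ through the geometry of the sublevel sets $\Omega_\Th(\e)$. The main tool is the identity $|\Th'(\z)|=\psi'(\z)$, valid for every $\z\in\T\setminus\r(\Th)$, where on the arc of $\T\setminus\r(\Th)$ containing $\z$ one writes $\Th=e^{i\psi}$ with $\psi$ real-analytic and $\psi'>0$. A short computation in this parametrisation shows that \eqref{Eq:lA-2} is equivalent to the assertion that $1/|\Th'|$ is Lipschitz on each arc of $\T\setminus\r(\Th)$, with a Lipschitz constant independent of the arc. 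This is a ``bounded nonlinearity'' property: for $\e$ close to $1$, the part of $\Omega_\Th(\e)$ lying against $\T$ near $\z$ is comparable to a Carleson box of side $\asymp (1-\e)/|\Th'(\z)|$, and $|\Th'|$ varies slowly along $\T\setminus\r(\Th)$, so consecutive such boxes are of comparable size and overlap. This ``box dictionary'' drives the whole argument; it also links the growth of $|\Th'|$ on arcs approaching a point $\xi\in\r(\Th)$ to the radial decay of $|\Th|$ at $\xi$, since both $\log(1/|\Th(r\xi)|)$ and $|\Th'(\z)|$ are comparable to Poisson-type sums evaluated at matching scales $1-r\asymp\mathrm{dist}(\z,\r(\Th))$.

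\smallskip
\noindent\emph{The implication $(a)\Rightarrow(b)$.} Fix $\e_0\in(0,1)$ with $\Omega_\Th(\e_0)$ connected. If \eqref{Eq:lA-2} failed, one would choose $\z_n\in\T\setminus\r(\Th)$ with $|\Th''(\z_n)|/|\Th'(\z_n)|^2\to\infty$, precompose $\Th$ with disc automorphisms centred near $\z_n$ (and, if necessary, postcompose with automorphisms of $\e_0\D$), pass to a locally uniform limit by normal families, and obtain an inner function whose sublevel set is disconnected near a boundary point -- contradicting the connectedness of $\Omega_\Th(\e_0)$; this is the Cohn--Aleksandrov rescaling mechanism (cf.\ \cite{Cohn1982}). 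Granting \eqref{Eq:lA-2}, the box dictionary shows that near $\xi\in\r(\Th)$ the set $\Omega_\Th(\e_0)$ is a union of Carleson boxes accumulating at $\xi$; connectedness forces these to fit together so that their union reaches $\xi$ radially, giving $\liminf_{r\to1^-}|\Th(r\xi)|<1$, which is \eqref{Eq:lA-1}.

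\smallskip
\noindent\emph{The implication $(b)\Rightarrow(a)$.} This is the step I expect to be the main obstacle. By the box dictionary, \eqref{Eq:lA-2} forces the components of $\Omega_\Th(\e)$ attached to $\T\setminus\r(\Th)$ to behave like Carleson boxes of slowly varying size; choosing $\e$ close enough to $1$ in terms of the constant $C$, boxes over each arc of $\T\setminus\r(\Th)$ overlap into one connected piece hugging that arc. Condition \eqref{Eq:lA-1} then supplies, near each $\xi\in\r(\Th)$, points of $\Omega_\Th(\e)$ arbitrarily close to $\xi$, so no far-away piece can be isolated. The delicate remaining task is to chain all the pieces -- those over the arcs, together with the bridges across $\r(\Th)$ provided by \eqref{Eq:lA-1} -- into a single component; this needs a careful covering argument and, when $\r(\Th)$ is disconnected, the remark that the piece over the arc separating two clusters of $\r(\Th)$ bridges between them. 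The correct quantitative choice of $\e$ in terms of $C$ is essential here.

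\smallskip
\noindent\emph{The equivalence $(b)\Leftrightarrow(c)$ (both under \eqref{Eq:lA-2}).} First, \eqref{Eq:lA-1} forces $|\r(\Th)|=0$: on a subset of $\r(\Th)$ of positive measure the radial limit of $\Th$ exists and is unimodular by Fatou's theorem, i.e.\ $|\Th(r\xi)|\to1$, contradicting \eqref{Eq:lA-1}. Hence, under (b), $\r(\Th)$ is nowhere dense, arcs of $\T\setminus\r(\Th)$ accumulate at every $\xi\in\r(\Th)$, and -- using the Lipschitz form of \eqref{Eq:lA-2} and the box dictionary -- \eqref{Eq:lA-1} at $\xi$ is equivalent to $|\Th'|$ being unbounded on every arc $\Gamma\subset\T\setminus\r(\Th)$ with $\xi\in\overline\Gamma$; this gives $(b)\Rightarrow(c)$. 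Conversely, under \eqref{Eq:lA-2}, $|\r(\Th)|=0$, and that unboundedness, the same box dictionary converts the growth of $|\Th'|$ near a point $\xi\in\r(\Th)$ into a radial dip of $|\Th|$ at $\xi$, which yields \eqref{Eq:lA-1} and hence $(c)\Rightarrow(b)$.
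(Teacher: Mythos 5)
This statement is not proved in the paper at all: it is imported as a known result, namely \cite[Theorem~1.11]{Aleksandrov2002} together with \cite[p.~2915, Remark~2]{Aleksandrov2002}, so there is no in-paper argument to compare yours against; you are attempting to reprove Aleksandrov's theorem. Judged on its own terms, your outline is a reasonable heuristic but not a proof, and the gaps sit exactly where the theorem is hard. First, the ``box dictionary'' on which everything rests is never established: the claim that for $\e$ close to $1$ the part of $\Omega_\Th(\e)$ near $\z\in\T\setminus\r(\Th)$ is comparable to a Carleson box of side $\asymp(1-\e)/|\Th'(\z)|$ is itself a nontrivial quantitative statement (it amounts to two-sided control of $1-|\Th(z)|$ by $(1-|z|)\,|\Th'(\z)|$ on a region whose size must be uniform in $\z$), and without it none of the subsequent ``overlapping boxes'' steps can be run. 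Second, in $(a)\Rightarrow(b)$ the rescaling/normal-families contradiction is not carried out: a locally uniform limit of rescaled inner functions need not be inner, and even if the limit had a disconnected sublevel set you do not explain how to pull that disconnectedness back to $\Omega_\Th(\e_0)$ for the original $\Th$ at the fixed $\e_0$; likewise the derivation of \eqref{Eq:lA-1} from connectedness (``connectedness forces these to fit together so that their union reaches $\xi$ radially'') is an assertion, not an argument. Third, you yourself flag $(b)\Rightarrow(a)$ --- chaining all the components into one for a suitable $\e=\e(C)$ --- as ``the main obstacle'' and leave it as ``a careful covering argument''; that chaining is the substance of the theorem and cannot be waved through.

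Two smaller points. The one step that is essentially complete and correct is that \eqref{Eq:lA-1} forces $|\r(\Th)|=0$: since $\Th$ is inner, its radial limits are unimodular almost everywhere, so a spectrum of positive measure would contain points where $\liminf_{r\to1^-}|\Th(r\xi)|=1$. Also, your reformulation of \eqref{Eq:lA-2} as a uniform Lipschitz bound for $1/|\Th'|$ along $\T\setminus\r(\Th)$ does work, but only after invoking the lower bound $|\Th'(\z)|\gtrsim 1$ on $\T\setminus\r(\Th)$ (which holds for nonconstant inner functions); the naive computation with $\Th=e^{i\psi}$ only gives $|\psi''|\lesssim \psi'+(\psi')^2$, and you should say why the linear term can be absorbed. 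As written, the proposal should either be completed along Aleksandrov's lines or replaced by the citation, as the paper does.
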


Write $f\lesssim g$ if there exists a constant $C>0$ such that $f\le Cg$, while $f\gtrsim g$ is understood in an analogous manner. If $f\lesssim g$ and $f\gtrsim g$, then the notation $f\asymp g$ is used.
With these preparations we are ready to prove Theorem~\ref{Thm1}.

\medskip

\noindent
\emph{Proof of Theorem~\ref{Thm1}.}
If $B$ is an arbitrary Blaschke product with zeros $\{z_n\}$, then
    \begin{equation*}
    \frac{B'(z)}{B(z)}=\sum_{n=1}^\infty \frac{|z_n|^2-1}{(1-\overline{z}_nz)(z_n-z)}\quad \text{and} \quad |B(z)|\le \frac{|z_n-z|}{|1-\overline{z}_nz|}.
    \end{equation*}
Using these estimates, one can easily verify
    \begin{equation*}
    |B''(z)|\le \frac{|B'(z)|^2}{|B(z)|}+2|B(z)|^{-1}\sum_{n=1}^\infty \frac{1-|z_n|^2}{|1-\overline{z}_nz|^3}, \quad z\in \D.
    \end{equation*}
In particular,
    \begin{equation*}
    \begin{split}
    |B''(\z)|&\le |B'(\z)|^2+2\sum_{n=1}^\infty \frac{1-|z_n|^2}{|1-\overline{z}_n\z|^3}
    \asymp |B'(\z)|^2+\sum_{n=1}^\infty \frac{1-|z_n|^2}{|z_n-\z|^3}
    \end{split}
    \end{equation*}
for every $\z \in \T\setminus \r(B)$. Using \cite[Theorem~2]{AhernClark1974}, we deduce that \eqref{Eq:lA-2} (with $\Th=B$)
is valid if
     \begin{equation}\label{Eq:sum-cond}
     \begin{split}
     \sum_{n=1}^\infty \frac{1-|z_n|^2}{|z_n-\z|^3}\lesssim \left(\sum_{n=1}^\infty \frac{1-|z_n|^2}{|z_n-\z|^2}\right)^2
     \end{split}
     \end{equation}
holds for every $\z \in \T\setminus \r(B)$.

Assume without loss of generality that zeros $\{z_n\}$ of $B$ lie in a Stolz domain $R(1,1,C)$, and
remind that $\{z_n\}$ is ordered by non-decreasing moduli.
Then $\r(B)=\{1\}$, and the
functions $f$ and $g$, defined by
    \begin{equation*}
    f(x)=
    \left\{ \begin{array}{rl}
    1, & \quad 0\le x<1, \\
    \min_{n\le x}(1-|z_n|), & \quad 1\le x<\infty,
    \end{array} \right.
    \end{equation*}
and
    $$g(\t)=\inf\{x:f(x)\le \t\}, \quad 0<\t\le 1,$$
are non-increasing. Since $f(w)=1-|z_n|$ for $n\in\N$ and $n\le w<n+1$,
it is clear that $g:(0,1] \rightarrow \N\cup \{0\}$, $f(x)>\t$ for $x<g(\t)$, and $f(x)\le \t$ for $x\ge g(\t)$.
Write $\z=e^{i\t}$, and assume without loss of generality that $\t>0$ is close enough to zero.
Using \cite[Lemma~3]{AhernClark1974} together with some standard estimates, we obtain
    \begin{equation}\label{Eq:proof-thm1-1}
    \begin{split}
    \left(\sum_{n=1}^\infty \frac{1-|z_n|^2}{|z_n-\z|^3}\right)^{1/2}
    &\asymp \left(\sum_{n=1}^\infty \frac{1-|z_n|^2}{||z_n|-\z|^3}\right)^{1/2}
    \asymp \left(\sum_{n=1}^\infty \frac{1-|z_n|}{[(1-|z_n|)^2+\t^2]^{3/2}}\right)^{1/2} \\
    &\le \left(\sum_{n<g(\t)} f(n)^{-2} + \t^{-3}\sum_{n\ge g(\t)} f(n)\right)^{1/2} \\
    &\le \left(\sum_{n<g(\t)} f(n)^{-2}\right)^{1/2} + \t^{-3/2}\left(\sum_{n\ge g(\t)} f(n)\right)^{1/2} \\
    &\le \sum_{n<g(\t)} f(n)^{-1} + \t^{-3/2}\left(\sum_{n\ge g(\t)} f(n)\right)^{1/2}.
    \end{split}
    \end{equation}
Applying hypothesis \eqref{Eq:thm1-cond} and the above-mentioned properties of $f$ and $g$, we find $C=C(B)>0$ such that
    $$\sum_{n\ge g(\t)} f(n)\ge Cf(g(\t)-1)\ge C\t.$$
It follows that
    \begin{equation}\label{Eq:proof-thm1-2}
    \begin{split}
    \sum_{n=1}^\infty \frac{1-|z_n|^2}{|z_n-\z|^2}&\asymp \sum_{n=1}^\infty \frac{1-|z_n|}{(1-|z_n|)^2+\t^2} \\
    &\ge \frac{1}{2}\sum_{n<g(\t)} f(n)^{-1} + \frac{\t^{-2}}{2}\sum_{n\ge g(\t)} f(n) \\
    &\ge  \frac{1}{2}\sum_{n<g(\t)} f(n)^{-1} + \frac{\sqrt{C}\t^{-3/2}}{2}\left(\sum_{n\ge g(\t)} f(n)\right)^{1/2}.
    \end{split}
    \end{equation}
Using estimates \eqref{Eq:proof-thm1-1} and \eqref{Eq:proof-thm1-2}, it is easy to see that condition \eqref{Eq:sum-cond} is
valid for $\z \in \T\setminus \{1\}$. Consequently, $B$ satisfies \eqref{Eq:lA-2}.

Let $B_0$ be the Blaschke product with zeros $\{|z_n|\}$.
It is obvious that $\liminf_{r \rightarrow 1^{-}} |B_0(r)|=0$.
Hence, by the deduction above, it is clear that $B_0$ satisfies condition (b) in Theorem~\ref{ThmB}, and thus also the other conditions are valid.
Since $B_0$ satisfies (c) in Theorem~\ref{ThmB}, also $B$ satisfies it. This is due to \cite[Lemma~3]{AhernClark1974},
which asserts that $|B'(\xi)|\asymp |B_0'(\xi)|$ for $\xi \in \T\setminus \{1\}$.
Hence $B$ is a one-component inner function by Theorem~\ref{ThmB}. This completes the proof. \hfill$\Box$

\section{Proof of Theorem~\ref{Thm1b}}\label{Sec4}

Let us prove Theorem~\ref{Thm1b}.

\medskip

\noindent
\emph{Proof of Theorem~\ref{Thm1b}.} Due to hypothesis (i), the set of mass points $\{e^{i\t_n}\}_{n=0}^\infty$ is closed. Consequently,
the spectrum $\r(S)$ consists of points $\{e^{i\t_n}\}_{n=0}^\infty$. Hence, by \cite[Chapter~2, Theorem~6.2]{Garnett1981}, we have
    $$\lim_{r \rightarrow 1^{-}} |S(r\xi)|=0, \quad \xi\in \r(S).$$
This means that $S$ satisfies condition \eqref{Eq:lA-1} (with $\Th=S$) in Theorem~\ref{ThmB}. Consequently, it suffices to show that $S$ fulfills also \eqref{Eq:lA-2}.

By a straightforward calculation, one can check that
    \begin{equation*}
    \begin{split}
    S''(z)=4\left(\sum_{n=0}^\infty \frac{\g_n e^{i\t_n}}{(z-e^{i\t_n})^3}+\left(\sum_{m=0}^\infty \frac{\g_m e^{i\t_m}}{(z-e^{i\t_m})^2}\right)^2\right)
    \exp\left(\sum_{k=0}^\infty \g_k\frac{z+e^{i\t_k}}{z-e^{i\t_k}}\right), \quad z\in \D.
    \end{split}
    \end{equation*}
Since
    $$|S'(\z)|=2\sum_{n=0}^\infty \frac{\g_n}{|\z-e^{i\t_n}|^2}, \quad \z\in \T\setminus \r(S),$$
by \cite[Theorem~2]{AhernClark1974}, we obtain
    \begin{equation*}
    \begin{split}
    |S''(\z)|\le 4\sum_{n=0}^\infty \frac{\g_n}{|\z-e^{i\t_n}|^3}+|S'(\z)|^2, \quad \z\in \T\setminus \r(S).
    \end{split}
    \end{equation*}
Consequently, it suffices to show
    \begin{equation}\label{Eq:thm5-proof-0}
    \begin{split}
    \sum_{n=0}^\infty \frac{\g_n}{|\z-e^{i\t_n}|^3}
    \lesssim \left(\sum_{n=0}^\infty \frac{\g_n}{|\z-e^{i\t_n}|^2}\right)^2, \quad \z\in \T\setminus \r(S).
    \end{split}
    \end{equation}

Assume without loss of generality that $\z \in \T\setminus \r(S)$ is close enough to one, and write $\z=e^{i\t}$.
Choose $j=j(\t,S)\in \N\cup \{0\}$ such that $|\t-\t_j|$ is as small as possible.
Then standard estimates yield
    \begin{equation}\label{Eq:thm5-proof-1}
    \begin{split}
    \left(\sum_{n=0}^\infty \frac{\g_n}{|\z-e^{i\t_n}|^3}\right)^{1/2}
    &\asymp \left(\sum_{n=0}^\infty \frac{\g_n}{|\t-\t_n|^3}\right)^{1/2} \\
    &\le |\t-\t_j|^{-3/2}  \left(\sum_{n=0}^\infty \g_n\right)^{1/2} \asymp |\t-\t_j|^{-3/2}
    \end{split}
    \end{equation}
and
    \begin{equation}\label{Eq:thm5-proof-2}
    \begin{split}
    \sum_{n=0}^\infty \frac{\g_n}{|\z-e^{i\t_n}|^2}\asymp \sum_{n=0}^\infty \frac{\g_n}{|\t-\t_n|^2}
    \ge \frac{\g_j}{|\t-\t_j|^2}.
    \end{split}
    \end{equation}
If $\t<0$, then $j=0$; and hence, \eqref{Eq:thm5-proof-0} is a direct consequence of \eqref{Eq:thm5-proof-1} and \eqref{Eq:thm5-proof-2}.
Let $\t>0$. By hypothesis (i), we have $\t_{j+1}<\t<\t_{j-1}$, where $j\in \N$ is large enough. Consequently, hypothesis (ii) gives
    $$\frac{\g_j}{|\t-\t_j|^2}\ge \frac{\g_j}{|\t-\t_j|^{3/2}|\t_{j-1}-\t_{j+1}|^{1/2}}\gtrsim |\t-\t_j|^{-3/2}.$$
According to this estimate, \eqref{Eq:thm5-proof-0} is a consequence of \eqref{Eq:thm5-proof-1} and \eqref{Eq:thm5-proof-2}. Finally the assertion follows from Theorem~\ref{ThmB}. \hfill$\Box$

\section{Derivatives of one-component inner functions in functions spaces}\label{Sec2}

We begin by fixing the notation.
Let $\HO(\D)$ be the space of all analytic functions in $\D$.
For $0<p<\infty$, the Hardy space $H^p$ consists of those $f\in\HO(\D)$ such that
    \begin{equation*}
    \begin{split}
    \|f\|_{H^p}= \sup_{0\le r<1}M_p(r,f)<\infty, \quad \text{where} \quad M_p^p(r,f)=\frac{1}{2\pi}\int_{0}^{2\pi} |f(re^{i\t})|^p\,d\t.
    \end{split}
    \end{equation*}
For $0<p<\infty$ and $-1<\a<\infty$, the Bergman space $A_\a^p$ consists of those $f\in\HO(\D)$ such that
    $$
    \|f\|_{A^p_\a}^p=\int_\D|f(z)|^p(1-|z|)^\a\,dA(z)<\infty,
    $$
where $dA(z)=dx\,dy$ is the Lebesgue area measure on $\D$.

By \cite[Theorem~5]{LP1936} and \cite[Lemma~1.4]{Vinogradov}, we have
    \begin{equation}\label{Eq:inc1}
    \{f:f'\in A_{p-1}^p\}\subset H^p, \quad 0<p\le 2.
    \end{equation}
and
    \begin{equation}\label{Eq:inc2}
    H^p \subset \{f:f'\in A_{p-1}^p\}, \quad 2\le p<\infty.
    \end{equation}
It is clear that $\{f:f'\in A_1^2\}=H^2$, while otherwise the inclusions are strict. For instance, an example showing the strictness of inclusions \eqref{Eq:inc1} and \eqref{Eq:inc2} can be given by using gap series; see details in \cite{BGP2004}. Nevertheless, we have the
following result, which is essentially a consequence of \cite[Theorem~6.2]{Ahern1979} and \cite[Theorem~3.10]{GGJ2011}.

\begin{theorem}\label{ThmA}
Let $\frac12<p<\infty$ and $\Th$ be an inner function. Then the following statements are equivalent:
    \begin{itemize}
    \item[\rm(a)] $\Th'\in H^p$,
    \item[\rm(b)] $\Th'\in A_{p-1}^{2p}$,
    \item[\rm(c)] $\Th''\in A_{p-1}^{p}$.
    \end{itemize}
\end{theorem}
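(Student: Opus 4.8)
The plan is to prove Theorem~\ref{ThmA} by establishing the chain of implications (a)$\Rightarrow$(b)$\Rightarrow$(c)$\Rightarrow$(a), exploiting both the general inclusions \eqref{Eq:inc1}--\eqref{Eq:inc2} and the special structure of inner functions. The key elementary fact about an inner function $\Th$ is the pointwise bound $|\Th'(z)|(1-|z|^2)\le 1$ for $z\in\D$, which follows from the Schwarz--Pick inequality since $\Th$ maps $\D$ into $\D$. This inequality is the engine that converts between different exponents and weights: raising $|\Th'|$ to a higher power costs a power of $(1-|z|)$.

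First I would prove (a)$\Rightarrow$(b). If $\Th'\in H^p$, then for $p\ge 2$ the inclusion \eqref{Eq:inc2} applied to $\Th'$ gives $\Th''\in A_{p-1}^p$, which is statement (c); so in that range it is cleaner to go (a)$\Rightarrow$(c) directly via \eqref{Eq:inc2}. For the equivalence with (b), write
    $$\|\Th'\|_{A_{p-1}^{2p}}^{2p}=\int_\D|\Th'(z)|^{2p}(1-|z|)^{p-1}\,dA(z)=\int_\D|\Th'(z)|^p\,|\Th'(z)|^p(1-|z|)^{p-1}\,dA(z).$$
Using $|\Th'(z)|(1-|z|)\lesssim 1$ we get $|\Th'(z)|^p(1-|z|)^{p-1}=\bigl(|\Th'(z)|(1-|z|)\bigr)^{p-1}|\Th'(z)|\lesssim |\Th'(z)|$, so
    $$\|\Th'\|_{A_{p-1}^{2p}}^{2p}\lesssim \int_\D |\Th'(z)|^p\,|\Th'(z)|\,dA(z).$$
This still is not obviously finite from $\Th'\in H^p$ alone, so instead I would invoke the known Littlewood--Paley type identity: for $f$ analytic, $\|f\|_{H^p}^p\asymp |f(0)|^p+\int_\D |f(z)|^{p-2}|f'(z)|^2(1-|z|)\,dA(z)$ when $0<p<\infty$ (valid for inner-derivative-type functions, and in particular applicable with $f=\Th'$). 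Applying this with $f=\Th'$ shows $\Th'\in H^p$ is equivalent to $\int_\D|\Th'|^{p-2}|\Th''|^2(1-|z|)\,dA<\infty$, and then the Schwarz--Pick bound lets one trade the $|\Th''|^2$ against $|\Th''|^p(1-|z|)^{p-2}$ up to constants, yielding (c); conversely (c) with the same Littlewood--Paley identity returns (a). The passage between (b) and (c) is then handled by a similar exchange: $|\Th''(z)|\lesssim |\Th'(z)|^2/(1-|z|)$ is \emph{not} available in general, but on the unit circle one has the Ahern--Clark type bound used in Section~\ref{Sec3}, and in the disc the reverse direction $|\Th'(z)|^2\lesssim$ (something involving $\Th''$) comes from integrating; I would instead relate (b) and (c) through (a) rather than directly.

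The main obstacle I anticipate is the range $\tfrac12<p<2$ in the implication (a)$\Rightarrow$(c), where the Hardy--Bergman inclusion goes the ``wrong way'' (\eqref{Eq:inc1} says $\{f:f'\in A_{p-1}^p\}\subset H^p$, not the reverse). Here one genuinely needs the inner structure: for a general $f\in H^p$ with $p<2$ one cannot conclude $f'\in A_{p-1}^p$, but for $f=\Th'$ the Schwarz--Pick inequality gives extra decay. Concretely, from $\Th'\in H^p$ and the Littlewood--Paley identity above, $\int_\D |\Th'|^{p-2}|\Th''|^2(1-|z|)\,dA<\infty$; since $p-2<0$ and $|\Th'|(1-|z|)\lesssim 1$ forces $|\Th'|^{p-2}\gtrsim (1-|z|)^{2-p}$, we obtain $\int_\D |\Th''(z)|^2(1-|z|)^{3-p}\,dA(z)<\infty$, and then another application of $|\Th''(z)|(1-|z|)\lesssim |\Th'(z)|\lesssim (1-|z|)^{-1}$, i.e. $|\Th''(z)|\lesssim (1-|z|)^{-2}$, lets us write $|\Th''|^p(1-|z|)^{p-1}=|\Th''|^2(1-|z|)^{3-p}\cdot|\Th''|^{p-2}(1-|z|)^{2p-4}\lesssim |\Th''|^2(1-|z|)^{3-p}$ whenever $|\Th''|^{p-2}(1-|z|)^{2p-4}\lesssim 1$, which holds since $p-2<0$ and $|\Th''|(1-|z|)^2\lesssim 1$. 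This chain of weight juggling, done carefully with the right inequalities in the right direction, is the delicate part; the rest is bookkeeping. Finally, (c)$\Rightarrow$(a) for $p<2$ follows by combining \eqref{Eq:inc1} applied to $\Th'$ (giving $\Th'\in H^p$ from $\Th''\in A_{p-1}^p$) — so in fact this direction needs no inner structure at all, and (b)$\Leftrightarrow$(c) is then closed using the Schwarz--Pick exchange in both directions. I would present the argument as: \eqref{Eq:inc1}--\eqref{Eq:inc2} give (c)$\Rightarrow$(a) and (for $p\ge2$) (a)$\Rightarrow$(c) for free; the Littlewood--Paley identity plus Schwarz--Pick closes the loop in the hard range $\tfrac12<p<2$ and identifies (b) with (c).
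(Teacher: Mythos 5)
Your overall architecture (use \eqref{Eq:inc1}--\eqref{Eq:inc2} where they apply, then exploit inner-function structure in the remaining range) is sensible, but the central step of your argument is wrong, and it is wrong at exactly the point where the theorem is genuinely hard. To pass from the Littlewood--Paley quantity $\int_\D|\Th'|^{p-2}|\Th''|^2(1-|z|)\,dA$ to the Bergman norm $\int_\D|\Th''|^p(1-|z|)^{p-1}\,dA$ you factor
$|\Th''|^p(1-|z|)^{p-1}=|\Th''|^2(1-|z|)^{3-p}\cdot\bigl(|\Th''|(1-|z|)^2\bigr)^{p-2}$
and claim the second factor is $\lesssim1$ because $p-2<0$ and $|\Th''|(1-|z|)^2\lesssim1$. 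This is backwards: raising to a negative power turns the upper bound $|\Th''|(1-|z|)^2\lesssim1$ into the \emph{lower} bound $\bigl(|\Th''|(1-|z|)^2\bigr)^{p-2}\gtrsim1$; to conclude $\lesssim1$ you would need $|\Th''|(1-|z|)^2\gtrsim1$, which is false ($\Th''$ has zeros, and near any boundary arc across which $\Th$ continues analytically $|\Th''|$ stays bounded while $(1-|z|)^2\to0$). The same defect infects your sketch of ``trading $|\Th''|^2$ against $|\Th''|^p(1-|z|)^{p-2}$'': every such exchange with $p<2$ needs a pointwise lower bound on $|\Th'|$ or $|\Th''|$ in terms of $(1-|z|)^{-1}$, and Schwarz--Pick supplies only upper bounds. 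So the implication (a)$\Rightarrow$(c) for $\frac12<p<2$ --- the one direction that \eqref{Eq:inc1}--\eqref{Eq:inc2} do not give --- is not established. You also leave (c)$\Rightarrow$(a) for $p>2$ unaddressed (since \eqref{Eq:inc1} only covers $p\le2$), and you never actually prove either implication involving (b).

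For comparison, the paper does not attempt a self-contained argument here: it quotes (a)$\Leftrightarrow$(c) from \cite[Theorem~3.10]{GGJ2011}, obtains (a)$\Leftrightarrow$(b) for $\frac12<p<1$ from \cite[Theorem~6.2]{Ahern1979} combined with \cite[Corollary~7]{PGRR2016}, and disposes of $p\ge1$ by showing that each of (a)--(c) forces $\Th$ to be a finite Blaschke product (using \cite[Theorem~7(c)]{Gluchoff1987} and the classical fact that $\Th'\in H^1$ implies finiteness). If you want a proof in the spirit of yours, you must either import one of these results or produce a genuinely new argument for (a)$\Rightarrow$(c) when $p<2$; elementary weight juggling with Schwarz--Pick cannot do it, which is consistent with the paper's remark that the corresponding equivalence is still open for $0<p\le\frac12$ for general inner functions.
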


Before the proof of Theorem~\ref{ThmA}, we note that, for  $f\in \HO(\D)$, $n\in \N$ and $0<p<\infty$,
we have $M_p(r,f^{(n)}) \asymp M_p(r,D^n f)$
with comparison constants independent of $r$ \cite{Flett1972}.
Here $D^n$ is the fractional derivative of order $n$.
This fact is exploited when we apply some results in the literature.

\begin{proof}
The equivalence $\rm (a)\Leftrightarrow (c)$ is a consequence of \cite[Theorem~3.10]{GGJ2011}.
For $\frac12<p<1$, the equivalence $\rm (a)\Leftrightarrow (b)$ can be verified, for instance,
using \cite[Theorem~6.2]{Ahern1979} together with \cite[Corollary~7]{PGRR2016}.
It is a well-known fact the only inner functions whose derivative belongs to $H^p$ for some $p\ge 1$ are finite Blaschke products.
Using this fact together with \cite[Theorem~7(c)]{Gluchoff1987} and the equivalence $\rm (a)\Leftrightarrow (c)$, it is easy to deduce that
an inner function $\Th$ is a finite Blaschke product if it satisfies any of conditions (a)--(c) for some $p\ge 1$.
In addition, it is clear that every finite Blaschke product $\Th$ satisfies conditions (a)--(c) for all $p>0$.
Finally the assertion follows by combining the above-mentioned facts.
\end{proof}

By \cite[Lemma~2]{AhernClark1976}, there exists a Blaschke product $B$ such that $B'\in A_{-1/2}^1 \setminus H^{1/2}$.
This means that, for $p=\frac12$, condition (b) in Theorem~\ref{ThmA} does not always imply (a).
Nevertheless, it is an open question whether the equivalence $\rm (a)\Leftrightarrow (c)$ is valid also for $0<p\le \frac12$.
This question was earlier posed in \cite{RS2018}.
The next result shows that the statement of Theorem~\ref{ThmA} is valid for all $p>0$ if $\Th$ is a one-component inner function.
Consequently, we obtain a partial answer to the question.

\begin{theorem}\label{Thm2}
Let $0<p<\infty$ and $\Th$ be a one-component inner function.  Then conditions \emph{(a)--(c)} in Theorem~\ref{ThmA} are equivalent.
\end{theorem}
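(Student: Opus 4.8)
The plan begins with a reduction to the only genuinely new case. For $p>\frac12$ the three conditions are equivalent by Theorem~\ref{ThmA}, which needs no hypothesis on $\Theta$ beyond being inner; every finite Blaschke product has $\Theta'$ and $\Theta''$ analytic and bounded on $\overline{\mathbb{D}}$ and hence satisfies (a)--(c) for all $p>0$; and for $0<p\le\frac12$ the implication $\mathrm{(c)}\Rightarrow\mathrm{(a)}$ is exactly \eqref{Eq:inc1} applied to $f=\Theta'$. So it remains to establish, for an infinite one-component inner function $\Theta$ and $0<p\le\frac12$, the chain $\mathrm{(a)}\Rightarrow\mathrm{(b)}\Rightarrow\mathrm{(c)}$; combined with the free implication above this gives the full equivalence.

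The one-component hypothesis enters through Theorem~\ref{ThmB}(b): $|\Theta''(\zeta)|\lesssim|\Theta'(\zeta)|^2$ on $\mathbb{T}\setminus\rho(\Theta)$. Fed into the elementary derivative formulas used in Sections~\ref{Sec3} and~\ref{Sec4}, this yields the dominated-error estimate $\sum_n(1-|z_n|^2)\,|1-\overline{z}_n\zeta|^{-3}\lesssim|\Theta'(\zeta)|^2$ on $\mathbb{T}\setminus\rho(\Theta)$ for Blaschke products, and its analogue for the atomic $S$. For a general inner function this fails, and it is precisely the uncontrolled third-order sum that forces the restriction $p>\frac12$ in the proof of Theorem~\ref{ThmA}: once this sum is dominated by $|\Theta'(\zeta)|^2$, the integral-mean comparisons underlying \cite[Theorem~6.2]{Ahern1979} and \cite[Theorem~3.10]{GGJ2011} — passing between $M_q(r,\Theta')$, $M_q(r,\Theta'')$ and their fractional-derivative versions as in the remark after Theorem~\ref{ThmA} — go through uniformly for all $p>0$. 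To keep the third-order sum controlled throughout $\mathbb{D}$ and not only on $\mathbb{T}$, one localizes using the sublevel set $\Omega_\Theta(\varepsilon)$ (a Carleson-type set for a one-component inner function): on $\mathbb{D}\setminus\Omega_\Theta(\varepsilon)$, where $|\Theta(z)|\ge\varepsilon$, the factor $1/|\Theta|$ in the identity $\Theta''=(\Theta')^2/\Theta+\Theta\,(\Theta'/\Theta)'$ is harmless, and the boundary-type estimates transplant inward by the sub-mean-value property, giving $\mathrm{(b)}\Rightarrow\mathrm{(c)}$ there and the portion of $\mathrm{(a)}\Rightarrow\mathrm{(b)}$ coming from $\mathbb{D}\setminus\Omega_\Theta(\varepsilon)$.

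The crux, and the main obstacle, is the remaining contribution from $\Omega_\Theta(\varepsilon)$, i.e.\ from a neighbourhood of $\rho(\Theta)$: this is exactly where the logarithmic loss that blocks the formal passage from Theorem~\ref{ThmA} lives, and estimating $\int_{\Omega_\Theta(\varepsilon)}|\Theta''|^p(1-|z|)^{p-1}\,dA$ and $\int_{\Omega_\Theta(\varepsilon)}|\Theta'|^{2p}(1-|z|)^{p-1}\,dA$ by the crude pointwise bound $|\Theta''(z)|\lesssim|\Theta'(z)|/(1-|z|)$ produces a divergent estimate. One has to use that for a one-component inner function $\Omega_\Theta(\varepsilon)$ is thin enough — for Blaschke products essentially clustered, up to bounded pseudohyperbolic distance, near the zeros, and for the atomic singular functions of Section~\ref{Sec4} near the mass points — so that both of these integrals over $\Omega_\Theta(\varepsilon)$ are finite exactly when $\Theta'\in H^p$. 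Carrying this out with the right quantitative description of $\Omega_\Theta(\varepsilon)$ in each of the two cases, and reconciling it with the boundary behaviour of $\Theta'$ near $\rho(\Theta)$, is where the genuine work of the proof is concentrated; the Blaschke and atomic singular cases, though parallel, require their own geometric inputs.
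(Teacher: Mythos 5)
There is a genuine gap. Your reduction is fine: Theorem~\ref{ThmA} disposes of $p>\frac12$, finite Blaschke products satisfy (a)--(c) trivially, and $\mathrm{(c)}\Rightarrow\mathrm{(a)}$ for $0<p\le\frac12$ is indeed \eqref{Eq:inc1} applied to $f=\Th'$. But the implications $\mathrm{(a)}\Rightarrow\mathrm{(b)}\Rightarrow\mathrm{(c)}$ for small $p$ --- the actual content of the theorem --- are never established. You correctly identify that the boundary estimate $|\Th''(\z)|\lesssim|\Th'(\z)|^2$ from Theorem~\ref{ThmB} does not by itself control the area integrals near $\r(\Th)$, and then you explicitly defer ``the genuine work'' of estimating $\int_{\Omega_\Th(\e)}|\Th''|^p(1-|z|)^{p-1}\,dA$ and $\int_{\Omega_\Th(\e)}|\Th'|^{2p}(1-|z|)^{p-1}\,dA$ without giving any argument. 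Moreover, your fallback of treating Blaschke products and atomic singular inner functions as two separate geometric cases would not cover a general one-component inner function, whose singular factor need not be atomic.

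The missing idea is Aleksandrov's strengthened Schwarz--Pick lemma, Theorem~\ref{ThmC}: for a one-component inner function, $|\Th^{(n)}(z)|\le C\bigl((1-|\Th(z)|)/(1-|z|)\bigr)^n$ holds \emph{pointwise on all of $\D$}, in particular inside $\Omega_\Th(\e)$. With $n=2$ this makes the level-set decomposition unnecessary: it gives directly
\begin{equation*}
\|\Th''\|_{A_{p-1}^p}^p \lesssim \int_{\D}\left(\frac{1-|\Th(z)|}{1-|z|}\right)^{2p}(1-|z|)^{p-1}\,dA(z),
\end{equation*}
which by Ahern's comparison \cite[Theorem~6]{Ahern1983} is $\asymp\|\Th'\|_{A_{p-1}^{2p}}^{2p}$, and this in turn is $\lesssim\|\Th'\|_{H^p}^p$ by Lemma~\ref{Lemma1} with $\a=p-1$ (valid for \emph{every} inner function with $0<p<1$, via the pointwise radial bound $|\Th'(re^{i\t})|\le4|\Th'(e^{i\t})|$ of \cite[Lemma~6.1]{Ahern1979}). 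This single chain yields $\mathrm{(a)}\Rightarrow\mathrm{(b)}\Rightarrow\mathrm{(c)}$ and, with \eqref{Eq:inc1}, closes the cycle. Without Theorem~\ref{ThmC} (or an equivalent interior estimate), your outline stops exactly where the proof has to begin.
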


Next we recall \cite[Theorem~1.9]{Aleksandrov2002}, which consists of a strengthened Schwarz-Pick lemma for
one-component inner functions. This result plays a key role in the proof of Theorem~\ref{Thm2}.

\begin{lettertheorem}\label{ThmC}
Let $n\in \N$ and $\Th$ be a one-component inner function. Then there exists $C=C(n,\Th)>0$ such that
    \begin{equation}\label{Eq:high-SP}
    |\Th^{(n)}(z)|\le C\left(\frac{1-|\Th(z)|}{1-|z|}\right)^n
    \end{equation}
for all $z\in \D$.
\end{lettertheorem}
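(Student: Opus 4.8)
The plan is to prove \eqref{Eq:high-SP} by isolating the only genuinely nontrivial regime and then transferring the boundary inequality \eqref{Eq:lA-2} of Theorem~\ref{ThmB} inward. For $n=1$ there is nothing to do: the classical Schwarz--Pick lemma gives $|\Th'(z)|\le\frac{1-|\Th(z)|^2}{1-|z|^2}\asymp\frac{1-|\Th(z)|}{1-|z|}$ for \emph{every} inner function, with no hypothesis on $\Th$. So fix $n\ge 2$ and a small $\delta\in(0,1)$. On the set $\{z\in\D:1-|\Th(z)|\ge\delta\}$ the claim is immediate for all $n$: the Cauchy estimate on the disc $|w-z|\le\tfrac12(1-|z|)$ together with $|\Th|\le 1$ yields $|\Th^{(n)}(z)|\lesssim(1-|z|)^{-n}$, while the right-hand side of \eqref{Eq:high-SP} is $\gtrsim\delta^{\,n}(1-|z|)^{-n}$ there. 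Thus everything reduces to the collar $E_\delta=\{z\in\D:1-|\Th(z)|<\delta\}$, whose closure meets $\T$ only along $\T\setminus\r(\Th)$.

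The one-component hypothesis enters through the geometry of $E_\delta$. Choosing $\delta$ small and using Theorem~\ref{ThmB} (the connectedness of the level set $\Omega_\Th(\e)$ together with \eqref{Eq:lA-2} and \eqref{Eq:lA-1}), one shows that $\Th$ has no critical points in $E_\delta$ and that the reverse Schwarz--Pick estimate $|\Th'(z)|\asymp\frac{1-|\Th(z)|}{1-|z|}$ holds there; this is the quantitative reflection of the fact that, near $\T\setminus\r(\Th)$, a one-component inner function behaves like a proper map with no branching. Granting this, $\frac{1-|\Th(z)|}{1-|z|}\asymp|\Th'(z)|$ on $E_\delta$, so it suffices to prove the equivalent estimate $|\Th^{(n)}(z)|\lesssim|\Th'(z)|^n$ for $z\in E_\delta$.

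I would obtain this from a maximum-principle argument applied to the ratio $R_n=\Th^{(n)}/(\Th')^n$. Since $\Th'$ has no zeros on $E_\delta$ and $\Th$ continues analytically across $\T\setminus\r(\Th)$ by reflection, $R_n$ is analytic on $E_\delta$. On the inner boundary $\{1-|\Th|=\delta\}$ the bound $|R_n|\lesssim 1$ is free: there the off-collar Cauchy estimate gives $|\Th^{(n)}|\lesssim(\tfrac{\delta}{1-|z|})^n$, and the reverse Schwarz--Pick estimate gives $|\Th'|\asymp\tfrac{\delta}{1-|z|}$, so the two powers cancel. On the outer boundary $\T\setminus\r(\Th)$ one needs the $n$-th order boundary inequality $|\Th^{(n)}(\z)|\lesssim|\Th'(\z)|^n$. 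For $n=2$ this is exactly \eqref{Eq:lA-2}, so the case $n=2$ of \eqref{Eq:high-SP} drops out immediately; for $n\ge 3$ the boundary inequality must itself be established, which I would attempt by induction along the arc, using the reflection relation $\Th(1/\bar z)=1/\overline{\Th(z)}$ and the analyticity of $\Th''/(\Th')^2$ across $\T\setminus\r(\Th)$. Once $|R_n|\lesssim 1$ on all of $\partial E_\delta$, the maximum principle (via an exhaustion of $E_\delta$ that avoids the Lebesgue-null, but nonempty, set $\r(\Th)$ in the closure) gives $|R_n|\lesssim 1$ throughout $E_\delta$, and reassembling the two regions yields \eqref{Eq:high-SP}.

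The hard part will be the all-orders boundary inequality on $\T\setminus\r(\Th)$, and the reason it is hard is instructive. One cannot power the induction by Cauchy estimates: differentiation costs a factor $(1-|z|)^{-1}$ but never manufactures the compensating factor $1-|\Th(z)|$, so any Cauchy- or normal-families argument applied to $\Th$, or to $\Th''/\Th'$, only delivers the linear bound $|\Th^{(n)}(z)|\lesssim\frac{1-|\Th(z)|}{(1-|z|)^n}$, which is optimal for general inner functions and is short of \eqref{Eq:high-SP} by $n-1$ powers of $1-|\Th|$. The extra powers are precisely the content of one-componentness and must be extracted from the connectedness of $\Omega_\Th(\e)$ through the collar geometry and the reflection structure described above; this is the step where Theorem~\ref{ThmB} is used in full force, and it is also where the proof of the reverse Schwarz--Pick estimate and the absence of critical points on $E_\delta$ carry the real analytic weight.
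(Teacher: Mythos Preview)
The paper does not prove Theorem~\ref{ThmC} at all: it is quoted verbatim as \cite[Theorem~1.9]{Aleksandrov2002} and used as a black box. So there is no ``paper's own proof'' to compare your sketch against; any assessment has to be on the internal merits of your argument.

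Your outline has the right skeleton (treat $\{1-|\Th|\ge\delta\}$ by Cauchy estimates, and on the collar $E_\delta=\{|\Th|>1-\delta\}$ replace $(1-|\Th|)/(1-|z|)$ by $|\Th'|$ via the reverse Schwarz--Pick inequality), and the reverse Schwarz--Pick step is indeed the heart of Aleksandrov's characterisation of one-component inner functions. But two points are genuine gaps, not details. First, your claim that the closure of $E_\delta$ meets $\T$ only in $\T\setminus\r(\Th)$ is false (and you implicitly retract it a few lines later): near any $\xi\in\r(\Th)$ the modulus $|\Th|$ oscillates and the region $\{|\Th|>1-\delta\}$ accumulates at $\xi$. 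Hence $R_n=\Th^{(n)}/(\Th')^n$ is analytic on an open set whose boundary contains $\r(\Th)$, and an ``exhaustion that avoids $\r(\Th)$'' does not give the maximum principle unless you already know $R_n$ is bounded near $\r(\Th)$ --- which is precisely what is at stake. You would need a Phragm\'en--Lindel\"of argument with quantitative control of $|R_n|$ near $\r(\Th)$, and nothing in the sketch supplies that control.

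Second, the boundary inequality $|\Th^{(n)}(\zeta)|\lesssim|\Th'(\zeta)|^n$ on $\T\setminus\r(\Th)$ for $n\ge 3$ is asserted to follow ``by induction along the arc'' from the reflection relation, but no mechanism is given. Differentiating $\Th''=\Th''/(\Th')^2\cdot(\Th')^2$ only shows $|\Th^{(3)}|\lesssim|\Th'|^3+|\Th'|\cdot|(\Th''/(\Th')^2)'|$, and bounding the last derivative again requires interior information of exactly the type you are trying to prove. In Aleksandrov's treatment this circularity is broken not by a bare maximum principle on $E_\delta$, but by first establishing the two-sided estimate $|\Th'(z)|\asymp(1-|\Th(z)|)/(1-|z|)$ on $E_\delta$ and then working with the \emph{pullback} geometry under $\Th$ (equivalently, with the hyperbolic metric and Koebe-type distortion for the locally univalent map $\Th|_{E_\delta}$), which yields all higher derivatives simultaneously. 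Your sketch gestures at that structure but does not implement it; as written, the argument stalls at $n=2$.
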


For the proof of Theorem~\ref{Thm2}, we need also a generalization of \cite[Theorem~6.1]{Ahern1979}.

\begin{lemma}\label{Lemma1}
Let $0<p<1$, $-1<\a<\infty$ and $\Th$ be an inner function. Then there exists $C=C(p,\a)>0$ such that
    \begin{equation*}
    \int_0^1 |\Th'(re^{i\t})|^{p+\a+1}(1-r)^\a dr \le C|\Th'(e^{i\t})|^{p}, \quad e^{i\t}\in \T\setminus \r(\Th).
    \end{equation*}
In particular, $\|\Th'\|_{A_\a^{p+\a+1}}^{p+\a+1} \le 2\pi C\|\Th'\|_{H^p}^p$.
\end{lemma}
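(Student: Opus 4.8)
The plan is to reduce Lemma~\ref{Lemma1} to its pointwise radial form and then integrate in the angular variable. Fix $e^{i\theta}\in\T\setminus\r(\Th)$, put $\b=p+\a+1$ and $\la=|\Th'(e^{i\theta})|$ (so $\la>0$ unless $\Th$ is a unimodular constant). The target is the pointwise bound $\int_0^1|\Th'(re^{i\theta})|^{\b}(1-r)^\a\,dr\le C(p,\a)\,\la^p$; integrating it over $\theta\in[0,2\pi)$ and using that $\Th'(e^{i\theta})$ is the a.e.\ radial limit of $\Th'$ (with $\r(\Th)$ contributing nothing in the only case of interest $\Th'\in H^p$) then gives $\|\Th'\|_{A_\a^{\b}}^{\b}\le 2\pi C\|\Th'\|_{H^p}^p$. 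Since the case $\a=0$ is \cite[Theorem~6.1]{Ahern1979}, it is natural that the proof rests on the same two ingredients as that result, namely the Schwarz--Pick inequality and a uniform boundary estimate for $1-|\Th|$.

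The first ingredient gives $|\Th'(z)|\le\frac{1-|\Th(z)|^2}{1-|z|^2}\le\frac{2(1-|\Th(z)|)}{1-|z|}$, so in particular $|\Th'(re^{i\theta})|\le\frac{2}{1-r}$. The second ingredient is the estimate $1-|\Th(re^{i\theta})|\le C_0\,\la\,(1-r)$ for $0\le r<1$ with $C_0$ an \emph{absolute} constant (the uniformity is essential, since otherwise the final constant would depend on $\Th$); together with the first ingredient this yields $|\Th'(re^{i\theta})|\le 2C_0\la$, whence $|\Th'(re^{i\theta})|\le\min\{2C_0\la,\ 2/(1-r)\}$ for all $r$. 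Substituting $s=1-r$ reduces the radial integral to $\int_0^1\min\{2C_0\la,2/s\}^{\b}s^\a\,ds$, which I would split at $s_0=1/(C_0\la)$: the contribution of $(0,s_0)$ is $\asymp\la^{\b}s_0^{\a+1}\asymp\la^{\b-\a-1}$, and that of $(s_0,1)$ is $2^\b\int_{s_0}^1s^{\a-\b}\,ds\asymp s_0^{\a-\b+1}\asymp\la^{\b-\a-1}$ (using $\a-\b=-p-1<-1$), so both equal $\la^p$ since $\b-\a-1=p$; the case $\la\le1/C_0$ (so $s_0\ge1$) is handled by the bound $2C_0\la$ alone together with $\la^{\b}\le\la^p$. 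This produces the pointwise estimate with a constant depending only on $p$ and $\a$.

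What remains, and where the real work lies, is the uniform boundary estimate. I would use the canonical factorization $\Th=\eta BS_\s$ with $|\eta|=1$, $B=\prod_nb_{a_n}$ a Blaschke product and $S_\s$ a singular inner function, and start from $1-|\Th(re^{i\theta})|\le\sum_n\bigl(1-|b_{a_n}(re^{i\theta})|\bigr)+\bigl(1-|S_\s(re^{i\theta})|\bigr)$, which follows from $1-xy\le(1-x)+(1-y)$ on $[0,1]^2$ iterated. For a single Blaschke factor, the identity $1-|b_a(z)|^2=\frac{(1-|a|^2)(1-|z|^2)}{|1-\overline{a}z|^2}$ together with the elementary bound $|1-\overline{a}re^{i\theta}|\ge\tfrac12|1-\overline{a}e^{i\theta}|$ (triangle inequality, using $|1-\overline{a}re^{i\theta}|\ge1-r\ge|a|(1-r)$) gives $1-|b_a(re^{i\theta})|\le1-|b_a(re^{i\theta})|^2\lesssim\la_a(1-r)$ with $\la_a=|b_a'(e^{i\theta})|=\frac{1-|a|^2}{|1-\overline{a}e^{i\theta}|^2}$. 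For the singular factor, $1-|S_\s(re^{i\theta})|\le-\log|S_\s(re^{i\theta})|=\int_\T\frac{1-r^2}{|re^{i\theta}-\xi|^2}\,d\s(\xi)$, and the pointwise kernel estimate $\frac{1-r^2}{|re^{i\theta}-e^{it}|^2}\le\frac{2(1-r)}{1-\cos(\theta-t)}$ (equivalent to $1+r^2-2r\cos\phi\ge\tfrac12(1+r)(1-\cos\phi)$) gives $1-|S_\s(re^{i\theta})|\le2\la_\s(1-r)$ with $\la_\s=\int_\T\frac{d\s(\xi)}{1-\cos(\theta-\arg\xi)}=|S_\s'(e^{i\theta})|$. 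Summing these per-factor bounds reduces the boundary estimate to the additivity identity $\la=\sum_n\la_{a_n}+\la_\s$.

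The main obstacle is precisely this additivity. It holds because $\frac{e^{i\theta}\Th'(e^{i\theta})}{\Th(e^{i\theta})}$ equals $\frac{d}{d\theta}\arg\Th(e^{i\theta})$, a \emph{nonnegative} real number for an inner function (the argument of the boundary values is nondecreasing), and because $\arg$ is additive over products; thus no cancellation occurs among the contributions of $\eta$, the $b_{a_n}$ and $S_\s$, and one gets $\la=\sum_n\la_{a_n}+\la_\s$ with all terms nonnegative. The series converges because $e^{i\theta}\notin\r(\Th)$ forces the zeros $\{a_n\}$ and $\supp\s$ to stay a positive distance from $e^{i\theta}$, whence $\sum_n\la_{a_n}\lesssim\sum_n(1-|a_n|^2)<\infty$ and $\la_\s\lesssim\s(\T)<\infty$. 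Granting this, summing the three families of estimates yields $1-|\Th(re^{i\theta})|\le C_0\la(1-r)$ with $C_0$ absolute, and the rest of the proof is the one-variable computation described above.
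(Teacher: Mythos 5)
Your proposal is correct and follows essentially the same route as the paper: both rest on the two pointwise bounds $|\Th'(re^{i\t})|\lesssim 1/(1-r)$ (Schwarz--Pick) and $|\Th'(re^{i\t})|\lesssim|\Th'(e^{i\t})|$, and then split the radial integral at the point where the two bounds cross, yielding $|\Th'(e^{i\t})|^p$. The only difference is that the paper simply cites \cite[Lemma~6.1]{Ahern1979} for the second bound, whereas you re-derive it from the canonical factorization and the additivity of the angular derivative; this is a self-contained version of the same ingredient, not a different method.
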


\begin{proof}
Let $e^{i\t}\in \T\setminus \r(\Th)$.
By \cite[Lemma~6.1]{Ahern1979}, we know that $|\Th'(re^{i\t})|\le 4|\Th'(e^{i\t})|$ for all $r\in [0,1)$.
Using this fact together with the Schwarz-Pick lemma, we obtain
    \begin{equation*}
    \begin{split}
    \int_0^1 |\Th'(re^{i\t})|^{p+\a+1}(1-r)^\a dr &\le\int_0^{x}(1-r)^{-p-1}dr + (4|\Th'(e^{i\t})|)^{p+\a+1}\int_x^1 (1-r)^\a dr \\
    &\lesssim (1-x)^{-p}-1+(1-x)^{\a+1}|\Th'(e^{i\t})|^{p+\a+1}
    \end{split}
    \end{equation*}
for every $x\in [0,1]$. Now it suffices to show that
    \begin{equation}\label{Eq:lemma1-1}
    \begin{split}
    (1-x)^{-p}-1+(1-x)^{\a+1}|\Th'(e^{i\t})|^{p+\a+1}\lesssim |\Th'(e^{i\t})|^p
    \end{split}
    \end{equation}
for some $x$.
If $|\Th'(e^{i\t})|\le 1$, then this true for $x=0$. In the case where $|\Th'(e^{i\t})|>1$, the
choice $x=1-1/|\Th'(e^{i\t})|$ implies \eqref{Eq:lemma1-1}. Since the last assertion is a direct
consequence of the first assertion, Hardy's convexity and the mean convergence theorem \cite{Duren1970}, the proof is complete.
\end{proof}

Now we are ready to prove Theorem~\ref{Thm2}.

\medskip

\noindent
\emph{Proof of Theorem~\ref{Thm2}.}
By Theorem~\ref{ThmA}, we may assume $0<p<1$ (or even $p\le \frac12$).
Using Theorem~\ref{ThmC} with $n=2$, \cite[Theorem~6]{Ahern1983} and Lemma~\ref{Lemma1} with $\a=p-1$, we obtain
    \begin{equation}\label{Eq:Thm2_1}
    \begin{split}
    \|\Th''\|_{A_{p-1}^p}^p &\lesssim \int_{\D}  \left(\frac{1-|\Th(z)|}{1-|z|}\right)^{2p}(1-|z|)^{p-1}dA(z)
    \asymp \|\Th'\|_{A_{p-1}^{2p}}^{2p}\lesssim  \|\Th'\|_{H^p}^{p}.
    \end{split}
    \end{equation}
The assertion follows from \eqref{Eq:inc1} and \eqref{Eq:Thm2_1}. \hfill$\Box$

\medskip

By the proof of Theorem~\ref{Thm2}, it is obvious that conditions (a)--(c) in Theorem~\ref{ThmA} are equivalent
also if $p>0$ and $\Th$ is an inner function satisfying \eqref{Eq:high-SP} with $n=2$.
Note that, using this observation and keeping mind the discussion after the proof of Theorem~\ref{ThmA},
we can find an inner function such that \eqref{Eq:high-SP} with $n=2$ is not valid for any $C>0$.

It is a well-known fact that, for $0<p<\infty$ and $-1<\a<\infty$,
the Bergman space $A_\a^p$ coincides with $\{f:f'\in A_{\a+p}^p\}$ \cite{Flett1972}.
Using this result, it is easy to generalize condition (c) in Theorem~\ref{Thm2} to the form
$\Th^{(n)}\in A_{p(n-1)-1}^p$ for any/every $n\in \N\setminus \{1\}$. For $0<p<1$, we
can show this also by modifying the proof of Theorem~~\ref{Thm2}; and as a substitute of
this process we obtain an alternative version of Theorem~\ref{Thm2}.

\begin{theorem}\label{Coro-move}
Let $0<p<\infty$ and $\Th$ be a one-component inner function. Then the following statements are equivalent:
    \begin{itemize}
    \item[\rm(a)] $\Th'\in H^p$,
    \item[\rm(b)] $\Th'\in A_{\a}^{p+\a+1}$ for some $\a\in (-1,\infty)$,
    \item[\rm(c)] $\Th'\in A_{\a}^{p+\a+1}$ for every $\a\in (-1,\infty)$.
    \end{itemize}
\end{theorem}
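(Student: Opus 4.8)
\emph{Proof proposal.}
The plan is to run the cycle (a)$\Rightarrow$(c)$\Rightarrow$(b)$\Rightarrow$(a), the implication (c)$\Rightarrow$(b) being trivial since the range $(-1,\infty)$ is nonempty. For (a)$\Rightarrow$(c) I would treat the two cases separately: when $0<p<1$ this is nothing but Lemma~\ref{Lemma1}, which yields $\|\Th'\|_{A_\alpha^{p+\alpha+1}}^{p+\alpha+1}\le 2\pi C(p,\alpha)\|\Th'\|_{H^p}^{p}$ for \emph{every} $\alpha\in(-1,\infty)$; and when $p\ge1$ the hypothesis $\Th'\in H^p$ forces $\Th$ to be a finite Blaschke product, so $\Th'$ is bounded on $\D$ and hence lies in every $A_\alpha^{p+\alpha+1}$ because $\int_\D(1-|z|)^\alpha\,dA(z)<\infty$.

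The substance is (b)$\Rightarrow$(a), which I would obtain by adapting the proof of Theorem~\ref{Thm2}. Assume $\Th'\in A_{\alpha_0}^{q}$ for some $\alpha_0>-1$, where $q:=p+\alpha_0+1>p$. One cannot move between the spaces $A_\alpha^{p+\alpha+1}$ for different $\alpha$ by Bergman-space inclusions, since their scaling indices $(\alpha+2)/(p+\alpha+1)$ differ; the key point is that, because $\alpha_0-q=-p-1$ \emph{independently of $\alpha_0$}, \cite[Theorem~6]{Ahern1983} recasts the hypothesis as
\begin{equation*}
\int_\D(1-|\Th(z)|)^{q}(1-|z|)^{-p-1}\,dA(z)\asymp\|\Th'\|_{A_{\alpha_0}^{q}}^{q}<\infty .
\end{equation*}
Next I fix $m\in\N$ with $mp\ge q$ (so automatically $m\ge2$, as $q>p$); since $0\le 1-|\Th(z)|\le1$ this gives $\int_\D(1-|\Th(z)|)^{mp}(1-|z|)^{-p-1}\,dA(z)<\infty$. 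Applying Theorem~\ref{ThmC} with $n=m$ (this is the only place the one-component hypothesis enters), the bound $|\Th^{(m)}(z)|^{p}\le C\left(\frac{1-|\Th(z)|}{1-|z|}\right)^{mp}$, after multiplying by $(1-|z|)^{(m-1)p-1}$, integrating, and using $(m-1)p-1-mp=-p-1$, yields
\begin{equation*}
\|\Th^{(m)}\|_{A_{(m-1)p-1}^{p}}^{p}\le C\int_\D(1-|\Th(z)|)^{mp}(1-|z|)^{-p-1}\,dA(z)<\infty .
\end{equation*}
Thus $\Th^{(m)}\in A_{(m-1)p-1}^{p}$, which by the identity $A_\beta^{p}=\{f:f'\in A_{\beta+p}^{p}\}$ of \cite{Flett1972} (applied $m-2$ times, all intermediate indices staying above $-1$) is equivalent to $\Th''\in A_{p-1}^{p}$. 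Theorem~\ref{Thm2} then gives $\Th'\in H^{p}$, completing the cycle.

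The step I expect to need the most care is the invocation of \cite[Theorem~6]{Ahern1983}: in the proof of Theorem~\ref{Thm2} it appears only with exponent $2p$ and weight $p-1$ (and for $0<p<1$), whereas here the exponent $q=p+\alpha_0+1$ may be arbitrarily large and the weight $\alpha_0$ ranges over all of $(-1,\infty)$, so I would want to verify that the comparison between $\int_\D|\Th'(z)|^{s}(1-|z|)^{t}\,dA(z)$ and $\int_\D(1-|\Th(z)|)^{s}(1-|z|)^{t-s}\,dA(z)$ is valid throughout the range $s>0$, $t>-1$ occurring here (one inequality is just Schwarz--Pick; the reverse is the substance of Ahern's theorem). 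Granting that, the rest is routine: the freedom to take the order $m$ of differentiation in Theorem~\ref{ThmC} as large as we like is exactly what swallows the discrepancy between the exponent $q$ dictated by (b) and the exponent $mp\ge q$ at which the strengthened Schwarz--Pick estimate is then applied, and everything else is elementary bookkeeping together with Flett's identity and Theorem~\ref{Thm2}.
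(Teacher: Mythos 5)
Your proposal is correct and follows essentially the same route as the paper: both rest on Lemma~\ref{Lemma1} for the direction out of $H^p$, and on Theorem~\ref{ThmC} with a high-order derivative $n$ chosen so that $np\ge p+\a+1$, combined with \cite[Theorem~6]{Ahern1983} and Flett's identity, for the direction back. The only (cosmetic) differences are that the paper writes a single closed chain of norm inequalities \eqref{Eq:Coro-move1} rather than a cycle of implications, raises the exponent on the $|\Th'|$ side via the Schwarz--Pick estimate $|\Th'(z)|(1-|z|)\lesssim 1$ instead of on the $(1-|\Th(z)|)$ side via $1-|\Th(z)|\le 1$, and finishes with the Littlewood--Paley inequality directly rather than by routing through Theorem~\ref{Thm2}.
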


\begin{proof}
By the proof of Theorem~\ref{ThmA}, we know that, for $1\le p<\infty$, $\Th$ satisfies any/all of conditions (a)--(c) if and only if
it is a finite Blaschke product. Hence we may assume $0<p<1$. Moreover, let $-1<\a<\infty$ and $n\in \N\setminus \{1\}$.
Then \cite[Theorem~3]{Flett1972}, Theorem~\ref{ThmC}, \cite[Theorem~6]{Ahern1983}, the Schwarz-Pick lemma and Lemma~\ref{Lemma1} yield
   \begin{equation}\label{Eq:Coro-move1}
    \begin{split}
    \|\Th'\|_{H^p}^p &\lesssim \|\Th^{(n)}\|_{A_{p(n-1)-1}^p}^p \lesssim \int_{\D}  \left(\frac{1-|\Th(z)|}{1-|z|}\right)^{np}(1-|z|)^{p(n-1)-1}dA(z) \\
    &\asymp \|\Th'\|_{A_{p(n-1)-1}^{np}}^{np}\le \|\Th'\|_{A_{\a}^{p+\a+1}}^{p+\a+1} \lesssim  \|\Th'\|_{H^p}^{p}
    \end{split}
    \end{equation}
when $p+\a+1 \le np$. Since we may choose $n=n(p,\a)$ such that $n\ge (p+\a+1)/p$, the assertion follows from \eqref{Eq:Coro-move1}.
\end{proof}

Note that, applying Theorem~\ref{Coro-move} and \cite[Theorem~3]{RS2018},
we can give several characterizations for one-component inner functions $\Th$ whose derivative belongs to $H^p$
for some $p\in (0,1)$. By \cite[Corollary~4]{RS2018}, these characterizations for $p\in (\frac12,1)$
are valid even if $\Th$ would be an arbitrary inner function.
Next we show a counterpart of Theorem~\ref{Coro-move} for all members of $\mathcal{R}_1$.

\begin{corollary}\label{Coro:stolz-move}
Let $0<p<\infty$ and $B\in \mathcal{R}_1$. Then the following statements are equivalent:
    \begin{itemize}
    \item[\rm(a)] $B'\in H^p$,
    \item[\rm(b)] $B'\in A_{\a}^{p+\a+1}$ for some $\a\in (-1,\infty)$,
    \item[\rm(c)] $B'\in A_{\a}^{p+\a+1}$ for every $\a\in (-1,\infty)$.
    \end{itemize}
\end{corollary}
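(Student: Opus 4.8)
The plan is to rerun the proof of Theorem~\ref{Coro-move} (and with it the proof of Theorem~\ref{Thm2}), whose only ingredient that is genuinely special to one-component inner functions is the higher-order Schwarz--Pick estimate of Theorem~\ref{ThmC}. For $p\ge1$ the proof of Theorem~\ref{ThmA} shows that an inner function satisfying any of (a)--(c) must be a finite Blaschke product, which trivially satisfies all of them, so we may assume $0<p<1$ and, after normalising, that the zeros $\{z_n\}$ of $B$ lie in a Stolz domain $R(1,1,C)$, whence $\r(B)=\{1\}$. The implication (a)$\Rightarrow$(c) holds for every inner function by Lemma~\ref{Lemma1}, and (c)$\Rightarrow$(b) is trivial; for (b)$\Rightarrow$(a) I would follow the chain in the proof of Theorem~\ref{Coro-move}: from $B'\in A_\a^{p+\a+1}$ the Schwarz--Pick inequality (raising the exponent) and a choice of $n\ge(p+\a+1)/p$ give $B'\in A_{p(n-1)-1}^{np}$, hence $\int_\D(1-|B(z)|)^{np}(1-|z|)^{-p-1}\,dA(z)<\infty$ by \cite[Theorem~6]{Ahern1983}; once the key estimate below is available this yields $B^{(n)}\in A_{p(n-1)-1}^p$, hence $B''\in A_{p-1}^p$ by \cite[Theorem~3]{Flett1972} (Flett's iterated identity $A_\gamma^p=\{f:f'\in A_{\gamma+p}^p\}$), and finally $B'\in H^p$ by \eqref{Eq:inc1}. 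So everything rests on the $\mathcal{R}_1$-analogue of Theorem~\ref{ThmC}, namely
\begin{equation}\label{Eq:keyCor}
\|B^{(n)}\|_{A_{p(n-1)-1}^p}^p\ \lesssim\ \int_\D\left(\frac{1-|B(z)|}{1-|z|}\right)^{np}(1-|z|)^{p(n-1)-1}\,dA(z)\ +\ 1,\qquad n\ge2,\ \ 0<p<1,
\end{equation}
the additive constant being harmless since the right-hand side is $\asymp\|B'\|_{A_{p(n-1)-1}^{np}}^{np}$ by \cite[Theorem~6]{Ahern1983}.

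To prove \eqref{Eq:keyCor} I would use that $\r(B)=\{1\}$ makes $B$ extend analytically across $\T\setminus\{1\}$, so both sides differ from their restrictions to a fixed sector $\{|\arg z|<\delta\}$ by bounded amounts, and split that sector into $\{|B|<\tfrac12\}$ and $\{|B|\ge\tfrac12\}$. On the former a Cauchy estimate gives $|B^{(n)}(z)|\lesssim(1-|z|)^{-n}$, so its contribution to $\|B^{(n)}\|_{A_{p(n-1)-1}^p}^p$ is $\lesssim\int_{\{|B|<1/2\}}(1-|z|)^{-p-1}\,dA\asymp\int_{\{|B|<1/2\}}(1-|B(z)|)^{np}(1-|z|)^{-p-1}\,dA$, already dominated by the right-hand side of \eqref{Eq:keyCor}. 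On $\{|B|\ge\tfrac12\}$ I would write $B^{(n)}=B\cdot\mathcal{B}_n\big((\log B)',\dots,(\log B)^{(n)}\big)$ with $\mathcal{B}_n$ the complete Bell polynomial, and establish the pointwise bound
\begin{equation*}
|(\log B)^{(m)}(z)|\ \lesssim\ \sum_n\frac{1-|z_n|}{|1-\overline{z_n}z|^{m+1}}\ \asymp\ \sum_n\frac{1-|z_n|}{\big[(1-|z_n|)^2+|1-z|^2\big]^{(m+1)/2}}=:\tau_{m+1}(z),\qquad |B(z)|\ge\tfrac12.
\end{equation*}
The first inequality comes from expanding $(\log\psi_n)^{(m)}$ and exploiting the algebraic cancellation already met for $m=1$ in the proof of Theorem~\ref{Thm1} (there $\tfrac{\overline{z_n}}{1-\overline{z_n}z}-\tfrac1{z_n-z}=\tfrac{1-|z_n|^2}{(z_n-z)(1-\overline{z_n}z)}$, and a binomial expansion propagates this to all $m$), together with $|z_n-z|\gtrsim|1-\overline{z_n}z|$ on $\{|B|\ge\tfrac12\}$ and the Stolz estimate $|1-\overline{z_n}z|\asymp[(1-|z_n|)^2+|1-z|^2]^{1/2}$ valid for zeros in $R(1,1,C)$.

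From $\tau_{m+1}(z)\le\tau_2(z)|1-z|^{-(m-1)}$ and by grouping the monomials of $\mathcal{B}_n$ according to their number of factors $k$ ($1\le k\le n$) one obtains, on $\{|B|\ge\tfrac12\}$,
$$|B^{(n)}(z)|\ \lesssim\ \sum_{k=1}^{n}\tau_2(z)^{k}\,|1-z|^{-(n-k)},\qquad\text{with}\qquad \tau_2(z)\asymp\frac{-\log|B(z)|}{1-|z|}\asymp\frac{1-|B(z)|}{1-|z|}.$$
Raising to the $p$-th power and integrating against $(1-|z|)^{p(n-1)-1}\,dA$, each term reduces — after a Fubini in the radial variable, as in the passage from \eqref{Eq:proof-thm1-1} to \eqref{Eq:proof-thm1-2} but now over a planar region — to $\int_{-\delta}^{\delta}\tau_2^{*}(\t)^{kp}|\t|^{(k-1)p}\,d\t$, where $\tau_2^{*}(\t)=\sum_n\frac{1-|z_n|}{(1-|z_n|)^2+\t^2}\asymp|B'(e^{i\t})|$ and $\int_{-\pi}^{\pi}|B'(e^{i\t})|^p\,d\t\asymp\|B'\|_{H^p}^p$. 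Splitting $\{|\t|<\delta\}$ into $\{\tau_2^{*}(\t)|\t|\ge1\}$, where $\tau_2^{*}(\t)^{kp}|\t|^{(k-1)p}\le\tau_2^{*}(\t)^{np}|\t|^{(n-1)p}$ so this part is $\asymp\int_{\{|B|\ge1/2\}}(1-|B(z)|)^{np}(1-|z|)^{-p-1}\,dA$, and $\{\tau_2^{*}(\t)|\t|<1\}$, where $\tau_2^{*}(\t)^{kp}|\t|^{(k-1)p}<|\t|^{-p}$ so this part is $\lesssim\int_{-\delta}^{\delta}|\t|^{-p}\,d\t<\infty$ (here $p<1$ is essential), one arrives at \eqref{Eq:keyCor}. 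I expect the main obstacle — the only genuinely new work beyond Theorems~\ref{Thm1},~\ref{Thm2} and~\ref{Coro-move} — to be the pointwise bound for $|(\log B)^{(m)}(z)|$ via the cancellation, and the Bell-polynomial plus Fubini bookkeeping that turns it into \eqref{Eq:keyCor}; the remaining steps are reruns of the cited proofs.
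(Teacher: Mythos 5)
Your overall architecture (reduce to $0<p<1$, run the chain of Theorem~\ref{Coro-move} with a hand-made substitute for Theorem~\ref{ThmC}) is a legitimate plan, and the pointwise ingredients are sound: the partial-fraction cancellation does give $|(\log B)^{(m)}(z)|\lesssim\sum_n(1-|z_n|)\,|1-\overline{z_n}z|^{-(m+1)}$ on $\{|B|\ge\tfrac12\}$, and $\tau_2(z)\asymp(1-|B(z)|)/(1-|z|)$ there. The gap is in the final integration bookkeeping. When you ``reduce'' the $k$-th Bell-polynomial term to $\int\tau_2^{*}(\t)^{kp}|\t|^{(k-1)p}\,d\t$ you have silently integrated over the whole radial range $1-r\le|\t|$, discarding the constraint $|B(re^{i\t})|\ge\tfrac12$; that constraint forces $1-r\lesssim1/\tau_2^{*}(\t)$ (since $-\log|B|\gtrsim(1-r)\tau_2$), which is a much shorter interval whenever $\tau_2^{*}(\t)|\t|\gg1$. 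As a result your claim that $\int_{\{\tau_2^{*}|\t|\ge1\}}\tau_2^{*}(\t)^{np}|\t|^{(n-1)p}\,d\t\asymp\int_{\{|B|\ge1/2\}}(1-|B|)^{np}(1-|z|)^{-p-1}\,dA$ is false: for the zeros $z_n=1-n^{-2}$ one has $\tau_2^{*}(\t)\asymp|\t|^{-3/2}$, so with $p=\tfrac12$ and $n=k=2$ the left-hand side is $\int|\t|^{-1}\,d\t=\infty$ while the right-hand side is finite (indeed $B'\in H^{1/2}$ here). So \eqref{Eq:keyCor} is not established by your argument. The repair is to keep the set $\{|B|\ge\tfrac12\}$ inside the radial integral, which collapses \emph{every} term (for every $k$) to $\asymp\int\tau_2^{*}(\t)^{p}\,d\t$ plus the harmless $\int|\t|^{-p}\,d\t$; one then also needs the converse inclusion, namely that for each $\t$ with $\tau_2^{*}(\t)|\t|\ge1$ there is a subinterval of length $\asymp1/\tau_2^{*}(\t)$ of the ray on which $|B|\in[\tfrac12,1-c]$, which uses the Stolz hypothesis to keep the ray away from the zeros. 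None of this is in your write-up, and it is exactly where the hard work lies.

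For comparison, the paper avoids all of this with a short reduction: it multiplies $B$ by the fixed Blaschke product $B_0$ with zeros $x_n=1-2^{-n}$, checks via \eqref{Eq:thm1-cond} that the merged zero sequence satisfies the hypothesis of Theorem~\ref{Thm1}, so $\Th=BB_0$ \emph{is} one-component and Theorem~\ref{Coro-move} applies to it; then the known decoupling results ($\Th'\in H^p$ iff $B'\in H^p$ and $B_0'\in H^p$, by \cite[Theorem~5]{AhernClark1974}, and the analogous statement for $A_\a^{p+\a+1}$ via \cite[Corollary~2.5]{PR2009} and \cite[Theorem~6]{Ahern1983}) together with $B_0'\in H^p\cap A_\a^{p+\a+1}$ transfer the equivalences from $\Th$ back to $B$. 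If you want a complete proof along your lines you must close the gap above; otherwise the product trick is the intended (and far shorter) route.
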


\begin{proof}
Assume without loss of generality that $0<p<1$ and zeros $\{w_n\}$ of $B$ lie in a Stolz domain $R(1,1,C)$.
Let $B_0$ be the Blaschke product with zeros $x_n=1-2^{-n}$ for $n\in \N$, write $\Th=BB_0$ and $\{z_n\}=\{w_n\} \cup \{x_n\}$,
where $\{z_n\}$ is ordered by non-decreasing moduli.
Then, for each $n\in \N$, there exists $k_n\in \N$ such that
    $$|x_{k_n}|\le |z_n|<|x_{k_n+1}|.$$
It follows that
    \begin{equation*}
    \begin{split}
    \frac{\sum_{|z_j|>|z_n|}(1-|z_j|)}{1-|z_n|}\ge \frac{1-|x_{k_n+1}|}{1-|x_{k_n}|}=\frac{1}{2}.
    \end{split}
    \end{equation*}
Consequently, $\Th$ is a one-component inner function by Theorem~\ref{Thm1}.

Let $-1<\a<\infty$.
By \cite[Theorem~5]{AhernClark1974} and a simple modification of \cite[Corollary~2.5]{PR2009} based on \cite[Theorem~6]{Ahern1983}, we know that
    $$\Th'\in H^p \quad \Leftrightarrow \quad B'\in H^p \quad \text{and} \quad B_0'\in H^p$$
and
    $$\Th'\in A_{\a}^{p+\a+1} \quad \Leftrightarrow \quad B'\in A_{\a}^{p+\a+1} \quad \text{and} \quad B_0'\in A_{\a}^{p+\a+1}.$$
In addition, \cite[Theorem~7]{AhernClark1974} and Lemma~\ref{Lemma1} imply $B_0'\in H^p \cap A_{\a}^{p+\a+1}$.
Finally, the assertion follows by using Theorem~\ref{Coro-move} together with the above-mentioned facts.
\end{proof}

Since the derivative of an arbitrary $B\in \mathcal{R}_1$ belongs to $H^p \cap A_{\a}^{p+\a+1}$
for every $p\in (0,\frac12)$ and $\a\in (-1,\infty)$ by \cite[Theorem~2.3]{GPV2007} and Lemma~\ref{Lemma1},
the statement of Corollary~\ref{Coro:stolz-move} for $p\neq \frac12$ does not come as a surprise. However, the case $p=\frac12$ is interesting
because it is not easy to find an alternative way to prove this result.

Recall that $f\in \HO(\D)$ belongs to the Nevalinna class $\mathcal{N}$ if
     $$\sup_{0\le r<1} \int_0^{2\pi} \log^+ |f(re^{i\t})|\,d\t<\infty,$$
where $\log^+ 0=0$ and $\log^+ x=\max\{0,\log x\}$ for $0<x<\infty$.
As a consequence of Theorem~\ref{ThmC}, we can also give sufficient conditions for higher order derivatives of one-component inner functions
to be in the Hardy space $H^p$ or Nevanlinna class $\mathcal{N}$.

\begin{corollary}\label{Coro-n-deri}
Let $0<p<\infty$, $n\in \N$ and $\Th$ be a one-component inner function. Then the following statements are valid:
    \begin{itemize}
    \item[\rm(a)] If $\Th'\in H^p$, then $\Th^{(n)}\in H^{p/n}$.
    \item[\rm(b)] If $\Th'\in \mathcal{N}$, then $\Th^{(n)}\in \mathcal{N}$.
    \end{itemize}
\end{corollary}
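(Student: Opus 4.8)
The plan is to reduce both statements to the single pointwise estimate
\[
|\Th^{(n)}(re^{i\t})| \le C\,\bigl(4\,|\Th'(e^{i\t})|\bigr)^{n}, \qquad 0\le r<1,\quad e^{i\t}\in\T\setminus\r(\Th),
\]
where $C=C(n,\Th)$ is the constant of Theorem~\ref{ThmC}. To obtain it, note first that by Theorem~\ref{ThmC} applied at $z=re^{i\t}$ it suffices to bound $\frac{1-|\Th(re^{i\t})|}{1-r}$ by $4|\Th'(e^{i\t})|$. For $e^{i\t}\in\T\setminus\r(\Th)$ the function $\Th$ continues analytically across $e^{i\t}$ and $|\Th(e^{i\t})|=1$, so integrating $\Th'$ along the radius gives $1-|\Th(re^{i\t})|\le\int_r^1|\Th'(se^{i\t})|\,ds$, while \cite[Lemma~6.1]{Ahern1979} bounds $|\Th'(se^{i\t})|\le 4|\Th'(e^{i\t})|$ for all $s\in[0,1)$; together these yield the displayed estimate. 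Since $\Th$ is one-component, $\r(\Th)$ is a Lebesgue-null subset of $\T$ by Theorem~\ref{ThmB}(c), and on $\T\setminus\r(\Th)$ the nontangential boundary function of $\Th'$ equals the analytic-continuation value $\Th'(e^{i\t})$; hence $\r(\Th)$ is harmless in the boundary integrals below.

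Granting this estimate, part~(a) is immediate: raising it to the power $p/n$ and integrating over $|z|=r$ gives
\[
M_{p/n}^{p/n}(r,\Th^{(n)}) \;\le\; (C\,4^{n})^{p/n}\,\frac{1}{2\pi}\int_0^{2\pi}|\Th'(e^{i\t})|^{p}\,d\t,
\]
and the right-hand side is finite and independent of $r$, because $\Th'\in H^p$ forces the boundary function of $\Th'$ into $L^p(\T)$ with $\frac{1}{2\pi}\int_0^{2\pi}|\Th'(e^{i\t})|^{p}\,d\t=\|\Th'\|_{H^p}^{p}$. Taking $\sup_{0\le r<1}$ gives $\Th^{(n)}\in H^{p/n}$. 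For part~(b), assuming (as we may) that $C\ge1$, apply $\log^{+}$ to the same estimate and use $\log^{+}(xy)\le\log^{+}x+\log^{+}y$ and $\log^{+}(x^{n})=n\log^{+}x$ to get $\log^{+}|\Th^{(n)}(re^{i\t})|\le\log(C4^{n})+n\log^{+}|\Th'(e^{i\t})|$ for a.e.\ $\t$; integrating in $\t$ bounds $\int_0^{2\pi}\log^{+}|\Th^{(n)}(re^{i\t})|\,d\t$ by a fixed constant plus $n\int_0^{2\pi}\log^{+}|\Th'(e^{i\t})|\,d\t$, uniformly in $r$. The last integral is finite because $\Th'\in\mathcal{N}$ and Fatou's lemma applied to the radial limits of $\Th'$ give $\log^{+}|\Th'|\in L^{1}(\T)$ for the boundary function. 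Hence $\Th^{(n)}\in\mathcal{N}$.

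The one point I expect to require care is the first paragraph, namely that one must \emph{not} try to prove $|\Th^{(n)}(z)|\lesssim|\Th'(z)|^{n}$ throughout $\D$: the reverse Schwarz--Pick inequality $\frac{1-|\Th(z)|}{1-|z|}\lesssim|\Th'(z)|$ fails at the critical points of $\Th$ (already at $z=0$ for $\Th(z)=z^{2}$), so Theorem~\ref{ThmC} has to be combined with the radial bound $|\Th'(se^{i\t})|\le4|\Th'(e^{i\t})|$ in order to transfer everything onto the boundary values of $\Th'$, where no such obstruction occurs. Once the boundary estimate is in place, both assertions are routine.
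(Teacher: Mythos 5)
Your proof is correct and follows essentially the same route as the paper: both arguments rest on Theorem~\ref{ThmC}, the fact that $\r(\Th)$ has Lebesgue measure zero, and the resulting pointwise domination $|\Th^{(n)}|\lesssim|\Th'|^{n}$ at the boundary, with (b) handled by the same estimate under $\log^{+}$. Your version is in fact slightly more careful at one point: by combining Theorem~\ref{ThmC} with Ahern's radial bound $|\Th'(se^{i\t})|\le 4|\Th'(e^{i\t})|$ you obtain a bound on $|\Th^{(n)}(re^{i\t})|$ that is uniform in $r$, so the integral means are controlled directly, whereas the paper states only the boundary inequality and appeals to Hardy's convexity and the mean convergence theorem to reach the same conclusion.
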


\begin{proof}
As a consequence of Theorem~\ref{ThmC} \cite{Aleksandrov2002}, we find $C=C(n,\Th)$ such that
    \begin{equation}\label{Eq:n-hardy-1}
    |\Th^{(n)}(\xi)|\le C|\Th'(\xi)|^n, \quad \xi\in \T\setminus \r(\Th).
    \end{equation}
Since the spectrum $\r(\Th)$ has a Lebesgue measure zero, inequality \eqref{Eq:n-hardy-1},
Hardy's convexity and the mean convergence theorem yield
    $$\|\Th^{(n)}\|_{H^{p/n}}^{p/n}=\frac{1}{2\pi}\int_0^{2\pi} |\Th^{(n)}(e^{i\t})|^{p/n} d\t
    \lesssim \frac{1}{2\pi}\int_0^{2\pi} |\Th'(e^{i\t})|^{p} d\t =\|\Th'\|_{H^p}^p.
    $$
Hence assertion (a) is proved. Since case (b) can be verified in a similar manner, the proof is complete.
\end{proof}

We close this note with two results regarding certain one-component singular inner functions.

\begin{corollary}\label{Coro-thm2}
Let $0<p<\infty$ and $S$ be the one-component atomic singular inner function associated with $\{e^{i\t_n}\}$ and $\{\gamma_n\}\in l^{1/2}$.
Then $S$ satisfies any/all of conditions \emph{(a)--(c)} in Theorem~\ref{ThmA} if and only if $p<\frac{1}{2}$.
\end{corollary}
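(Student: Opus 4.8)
The plan is to combine Theorem~\ref{Thm2}, which makes conditions (a)--(c) equivalent for the one-component function $S$, with a determination of exactly when $S'\in H^p$; the claim is then that this happens if and only if $p<\frac12$.

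First I would handle $p\ge\frac12$. Since $S$ is one-component, $\r(S)$ has Lebesgue measure zero, and at every $\z=e^{i\t}\in\T\setminus\r(S)$ the function $S$ continues analytically with $|S'(\z)|=2\sum_n\g_n|\z-e^{i\t_n}|^{-2}\asymp\sum_n\g_n|\t-\t_n|^{-2}$. If $S'$ belonged to $H^p$, then its nontangential boundary function would lie in $L^p(\T)$; but fixing one mass point $e^{i\t_0}$ with $\g_0>0$ and using $|S'(e^{i\t})|\gtrsim\g_0|\t-\t_0|^{-2}$ for $\t$ near $\t_0$, the integral $\int_\T|S'(e^{i\t})|^p\,d\t$ diverges whenever $2p\ge1$. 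Hence $S'\notin H^p$, and by Theorem~\ref{Thm2} none of (a)--(c) holds when $p\ge\frac12$; for $p\ge1$ this is also immediate from the remark after Theorem~\ref{ThmA}, since $S$ is not a finite Blaschke product.

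For the converse, fix $0<p<\frac12$ and choose $n=n(p)\in\N$ with $n\ge2$ and $np\ge\frac12$. By the computation in the proof of Theorem~\ref{Coro-move} (which rests on Theorem~\ref{ThmC} and applies since $S$ is one-component),
    $$\|S'\|_{H^p}^p\lesssim\int_\D\Big(\frac{1-|S(z)|}{1-|z|}\Big)^{np}(1-|z|)^{p(n-1)-1}\,dA(z)=\int_\D(1-|S(z)|)^{np}(1-|z|)^{-p-1}\,dA(z).$$
Now $1-|S(z)|\le\min\{1,-\log|S(z)|\}$ and $-\log|S(z)|=\sum_m\g_m\frac{1-|z|^2}{|z-e^{i\t_m}|^2}\le2(1-|z|)P(z)$, where $P(z):=\sum_m\g_m|z-e^{i\t_m}|^{-2}$. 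Since $np\ge\frac12$, treating the sets $\{2(1-|z|)P(z)\le1\}$ and $\{2(1-|z|)P(z)>1\}$ separately one gets $\min\{1,(2(1-|z|)P(z))^{np}\}\le(2(1-|z|)P(z))^{1/2}$, so $(1-|S(z)|)^{np}\le\sqrt2\,(1-|z|)^{1/2}P(z)^{1/2}$. Using subadditivity of $t\mapsto t^{1/2}$, $P(z)^{1/2}\le\sum_m\g_m^{1/2}|z-e^{i\t_m}|^{-1}$, hence
    $$\int_\D(1-|S(z)|)^{np}(1-|z|)^{-p-1}\,dA(z)\le\sqrt2\sum_m\g_m^{1/2}\int_\D\frac{(1-|z|)^{-p-1/2}}{|z-e^{i\t_m}|}\,dA(z).$$
The inner integral is a finite constant independent of $m$, precisely because $-p-\frac12>-1$ and $1<(-p-\frac12)+2$, both equivalent to $p<\frac12$. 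Since $\{\g_n\}\in l^{1/2}$, the right-hand side is finite, so $S'\in H^p$; by Theorem~\ref{Thm2}, (a)--(c) hold.

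The main obstacle is the ``if'' direction, and in particular the necessity of passing to a derivative of order $n$ that grows as $p\to0^+$. The naive route --- estimating $\int_\T|S'(e^{i\t})|^p\,d\t$ by $\int_\T\big(\sum_n\g_n|\t-\t_n|^{-2}\big)^p\,d\t$ and pulling the exponent inside by subadditivity --- only bounds this by a multiple of $\sum_n\g_n^p$, which may diverge for $p<\frac12$ since $l^{1/2}\not\subset l^p$, and it cannot be rescued using only \eqref{Eq:lA-2}, which supplies lower rather than upper bounds for $\sum_n\g_n|\t-\t_n|^{-2}$. Working instead with the $n$-th derivative and the interior estimate of Theorem~\ref{ThmC} trades extra boundary decay of $1-|S(z)|$ against the \emph{same} interior weight $(1-|z|)^{-p-1}$, which is exactly what makes the square-root (not $p$-th power) estimate available and so lets the $l^{1/2}$ hypothesis enter; the constraint $p<\frac12$ is then forced by convergence of the weighted-area integral. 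The routine points to double-check are the elementary inequality $\min\{1,X^{np}\}\le X^{1/2}$ on the two regions and the standard estimate $\int_\D(1-|z|)^{a}|z-\xi|^{-b}\,dA(z)<\infty$, uniformly for $\xi\in\T$, when $a>-1$ and $b<a+2$ (here $a=-p-\frac12$, $b=1$).
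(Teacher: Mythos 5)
Your proof is correct, but it takes a genuinely different route from the paper. The paper disposes of the corollary in two lines: it quotes \cite[Theorem~3]{PGRR2016}, which states that for $\frac14\le p<\infty$ and $\{\g_n\}\in l^{1/2}$ one has $S'\in A_{p-1}^{2p}$ if and only if $p<\frac12$, and then combines this with Theorem~\ref{Thm2} and the inclusion $H^{p_1}\subset H^{p_2}$ for $p_2\le p_1$ to cover the range $p<\frac14$. You instead reprove the relevant special case of that external result from scratch: necessity of $p<\frac12$ via the boundary formula $|S'(\z)|=2\sum_n\g_n|\z-e^{i\t_n}|^{-2}$ (which already appears in the proof of Theorem~\ref{Thm1b} and forces divergence of $\int_{\T}|S'|^p$ near any single atom once $2p\ge1$), and sufficiency via the chain from \eqref{Eq:Coro-move1} together with $1-|S(z)|\le\min\{1,-\log|S(z)|\}\le\min\{1,2(1-|z|)P(z)\}$, the elementary bound $\min\{1,X^{np}\}\le X^{1/2}$ for $np\ge\frac12$, subadditivity of $t\mapsto t^{1/2}$ (this is exactly where $\{\g_n\}\in l^{1/2}$ enters), and the standard Forelli--Rudin estimate; I checked these steps and they are all sound, including the uniformity in $\xi\in\T$ of $\int_{\D}(1-|z|)^{-p-1/2}|z-\xi|^{-1}\,dA(z)$ when $p<\frac12$. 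What your argument buys is self-containedness --- it replaces the citation of \cite{PGRR2016} by a page of computation resting only on Theorem~\ref{ThmC} and the machinery already developed for Theorem~\ref{Coro-move} --- at the cost of length; what the paper's version buys is brevity and the fact that the cited theorem is sharper (it pins down membership in $A_{p-1}^{2p}$ for the full range $\frac14\le p<\infty$ without assuming one-componentness). Both arguments need Theorem~\ref{Thm2} to pass between conditions (a)--(c).
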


\begin{proof}
By \cite[Theorem~3]{PGRR2016}, for $\frac14\le p<\infty$, the derivative of $S$ belongs to $A_{p-1}^{2p}$ if and only if $p<\frac12$.
Since $H^{p_1} \subset H^{p_2}$ for $0<p_2\le p_1<\infty$, the assertion follows from this result and Theorem~\ref{Thm2}.
\end{proof}

The following result shows that Corollary~\ref{Coro-n-deri}(a) is sharp.

\begin{corollary}\label{Coro-n-deri-2}
Let $0<p<\infty$, $m\in \N$ and $S$ be the one-component atomic singular inner function associated with $\{e^{i\t_n}\}$ and $\{\gamma_n\}\in l^{1/2}$.
Moreover, assume that there exist an index $j=j(S)$ and $\e=\e(j)>0$ such that $|\t_j-\t_n|>\e$ for all $n\neq j$.
Then $S^{(m)}\in H^p$ if and only if $p<\frac{1}{2m}$.
\end{corollary}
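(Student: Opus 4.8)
The plan is to prove the two implications of the equivalence separately. For sufficiency I would use only the results already obtained in this section; for necessity I would argue by a direct boundary estimate at the isolated atom $e^{i\t_j}$.

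\medskip

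\emph{Sufficiency of $p<\tfrac1{2m}$.} Since $S$ is a one-component inner function with $\{\g_n\}\in l^{1/2}$, Corollary~\ref{Coro-thm2} shows that $S'\in H^q$ for every $q\in(0,\tfrac12)$. Applying Corollary~\ref{Coro-n-deri}(a) with $p$ there replaced by $q$ and with $n=m$, we obtain $S^{(m)}\in H^{q/m}$ for every $q\in(0,\tfrac12)$; letting $q\uparrow\tfrac12$ gives $S^{(m)}\in H^p$ for all $p<\tfrac1{2m}$. (This implication uses neither the isolated-atom hypothesis, which enters only in the converse.)

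\medskip

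\emph{Necessity of $p<\tfrac1{2m}$.} Set $a=e^{i\t_j}$ and factor $S=S_a\widetilde S$, where
    $$S_a(z)=\exp\left(\g_j\frac{z+a}{z-a}\right),\qquad \widetilde S(z)=\exp\left(\sum_{n\ne j}\g_n\frac{z+e^{i\t_n}}{z-e^{i\t_n}}\right).$$
By the hypothesis $|\t_j-\t_n|>\e$ for $n\ne j$, one can choose an open neighbourhood $U\subset\C$ of the closed arc $\overline{\G}$, $\G=\{\z\in\T:|\z-a|<\e/2\}$, lying at a positive distance from every $e^{i\t_n}$ with $n\ne j$; on $U$ the series in the exponent of $\widetilde S$ converges uniformly (since $\sum_n\g_n<\infty$), so $\widetilde S$ extends analytically to $U$, whence $\widetilde S$ together with each of its derivatives is bounded on $\G$, while $|\widetilde S(\z)|=1$ for $\z\in\G$. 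Writing $S_a=e^u$ with $u(z)=\g_j+\tfrac{2a\g_j}{z-a}$ and expanding $S_a^{(k)}$ by Fa\`a di Bruno's formula, the contribution coming from $u'$ alone has a pole of exact order $2k$ at $a$ while every remaining contribution has a pole of order at most $2k-1$; since $|e^{u(\z)}|=1$ on $\G$, this yields $|S_a^{(k)}(\z)|\asymp|\z-a|^{-2k}$ for $\z\in\G$ near $a$ and each $k\ge0$. Inserting these estimates into the Leibniz expansion $S^{(m)}=\sum_{k=0}^m\binom mk S_a^{(k)}\widetilde S^{(m-k)}$, the term $k=m$ dominates and
    $$|S^{(m)}(\z)|\ \ge\ \bigl|S_a^{(m)}(\z)\widetilde S(\z)\bigr|-\sum_{k=0}^{m-1}\binom mk\bigl|S_a^{(k)}(\z)\bigr|\,\bigl|\widetilde S^{(m-k)}(\z)\bigr|\ \gtrsim\ |\z-a|^{-2m}$$
for $\z\in\G$ with $|\z-a|$ sufficiently small. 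Finally, $S^{(m)}\in H^p$ forces the boundary function of $S^{(m)}$ to lie in $L^p(\T)$ \cite{Duren1970}; since $\r(S)=\{e^{i\t_n}\}$ has Lebesgue measure zero, this boundary function coincides almost everywhere with the analytic continuation of $S^{(m)}$ across $\T\setminus\r(S)$, so $\int_\G|S^{(m)}(\z)|^p\,|d\z|<\infty$. The displayed lower bound then forces $\int_0^\delta t^{-2mp}\,dt<\infty$ for some $\delta>0$, i.e.\ $2mp<1$, which is the desired conclusion.

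\medskip

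The step I expect to demand the most care is the two-sided comparison $|S^{(m)}(\z)|\asymp|\z-a|^{-2m}$ on $\G$: one has to check that the lower-order poles generated by Fa\`a di Bruno's formula and by the Leibniz rule — all of order at most $2m-1$ — cannot cancel the leading $(u')^m$-contribution. This follows at once once the pole orders are tracked, but it is the one place where the bookkeeping must be carried out explicitly.
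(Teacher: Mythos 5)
Your argument is correct, and its overall architecture matches the paper's: sufficiency via Corollary~\ref{Coro-thm2} combined with Corollary~\ref{Coro-n-deri}(a), and necessity by splitting off the isolated atom $S_a$ at $e^{i\t_j}$ and localizing to a boundary arc where the remaining factor $\widetilde S$ is analytic with unimodular boundary values, so that $|S^{(m)}|\gtrsim|S_a^{(m)}|$ there. The one genuine difference is the last step of the necessity: the paper disposes of the single atom by citing (a simple modification of) the main result of \cite{MP1982}, which gives $S_a^{(m)}\in H^p$ if and only if $p<\frac1{2m}$, whereas you prove the needed half of that statement from scratch by writing $S_a=e^u$ with $u(z)=\g_j+\frac{2a\g_j}{z-a}$ and tracking pole orders in Fa\`a di Bruno's formula to get $|S_a^{(m)}(\z)|\asymp|\z-a|^{-2m}$ on the arc, then integrating. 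Your version is self-contained and only needs the lower bound, at the cost of the combinatorial bookkeeping you flag (which does check out: the $(u')^m$ term has pole order $2m$, every other Bell-polynomial term has order $k+\sum m_i<2m$, and $|e^{u}|=1$ on $\T$); the paper's version is shorter but leans on an external reference. Both correctly use that membership in $H^p$ forces the $L^p$-integrability of the boundary function, which on $\T\setminus\r(S)$ agrees with the analytic continuation \cite{Duren1970}.
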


\begin{proof}
By Corollary~\ref{Coro-thm2}, $S'\in H^{mp}$ if and only if $p<\frac1{2m}$.
Consequently, Corollary~\ref{Coro-n-deri}(a) implies $S^{(m)}\in H^p$ for $p<\frac{1}{2m}$.
Hence it suffices to show that $S^{(m)}\in H^p$ only if $p<\frac{1}{2m}$.

Fix $j=j(S)$ to be the smallest index such that $|\t_j-\t_n|>\e$ for all $n\neq j$ and some $\e=\e(j)>0$.
Let us represent $S$ in the form $S=S_1S_2$, where
    $$S_1(z)=\exp\left(\g_j \frac{z+e^{i\t_j}}{z-e^{i\t_j}}\right), \quad z\in \D,$$
and $S_2=S/S_1$. Using this factorization, we obtain
    \begin{equation*}
    |S^{(m)}(e^{i\t})|=\left|\sum_{k=0}^m \binom{m}{k} S_1^{(m-k)}(e^{i\t})S_2^{(k)}(e^{i\t})\right|
    \asymp \left|S_1^{(m)}(e^{i\t})S_2(e^{i\t})\right|=|S_1^{(m)}(e^{i\t})|
    \end{equation*}
when $\t$ (which is not $\t_j$) is close enough to $\t_j$ depending on $S$ and $m$. Consequently, we find a sufficiently small $\a=\a(p,S,m)>0$ such that
    \begin{equation*}
    \int_0^{2\pi}|S_1^{(m)}(e^{i\t})|^p\,d\t \asymp \int_{\t_j-\a}^{\t_j+\a}|S_1^{(m)}(e^{i\t})|^p\,d\t
    \lesssim \int_0^{2\pi}|S^{(m)}(e^{i\t})|^p\,d\t,
    \end{equation*}
where the comparison constants depend only on $p$, $S$ and $m$. It follows that $S^{(m)}\in H^p$ only if $S_1^{(m)}\in H^p$.
Moreover, a simple modification of the main result of \cite{MP1982} shows that $S_1^{(m)}\in H^p$ if and only if $p<\frac{1}{2m}$.
Combining these facts, we deduce that $S^{(m)}\in H^p$ (if and) only if $p<\frac{1}{2m}$. This completes the proof.
\end{proof}

\medskip

\noindent
\textbf{Acknowledgements.} The author thanks Toshiyuki Sugawa for valuable comments,
and the referee for careful reading of the manuscript.

\end{document}